\newcommand{\RR}{\mathbb{R}}
\newcommand{\NN}{\mathbb{N}}
\newcommand{\ZZ}{\mathbb{Z}}
\newcommand{\affS}{\widetilde{S}}
\newcommand{\wt}{\widetilde}
\newcommand{\ol}{\overline}
\newcommand{\height}{\op{ht}}
\newcommand{\dis}{\operatorname{dis}}
\newcommand{\lS}{\ell_S}
\newcommand{\op}{\operatorname}
\newtheorem{conjecture}{Conjecture}[section]
\newtheorem{theorem}[conjecture]{Theorem}
\newtheorem{lemma}[conjecture]{Lemma}
\newtheorem{proposition}[conjecture]{Proposition}
\theoremstyle{definition}
\newtheorem{question}[conjecture]{Question}
\newtheorem{remark}[conjecture]{Remark}
\newtheorem{definition}[conjecture]{Definition}
\newtheorem*{question*}{Question}
\newcommand{\negphantom}[1]{\settowidth{\dimen0}{#1}\hspace*{-\dimen0}}
\title{Disarray, reduced words, and $321$-avoidance in George groups}
\author[Joel Brewster Lewis]{Joel Brewster Lewis$^*$}
\address{Department of Mathematics, George Washington University, Washington, DC, USA}
\email{jblewis@gwu.edu}
\thanks{$^*$Research partially supported by a grant from the Simons Foundation (634530).}
\author[Bridget Eileen Tenner]{Bridget Eileen Tenner$^\dagger$}
\address{Department of Mathematical Sciences, DePaul University, Chicago, IL, USA}
\email{bridget@math.depaul.edu}
\thanks{$^\dagger$Research partially supported by NSF Grant DMS-2054436.}
\begin{document}

\maketitle

\begin{abstract}
    Previous work has shown that the disarray (or displacement) of an (affine) (signed) permutation is bounded in terms of its Coxeter length.  Here, we characterize the permutations for which the bound is sharp in two ways: in terms of a natural property of their reduced words, and by ``globally'' avoiding the pattern $321$. 
\end{abstract}

\section{Introduction}\label{sec:introduction}

For an (affine) (signed) permutation $w$ with window size $n$, define the \emph{disarray} or \emph{total displacement} $\dis(w)$ to be
\[
\dis(w) := \sum_{i = 1}^n |w(i) - i|.
\]
In \cite{LT}, we showed that for a class of groups that we call \emph{unbranched George groups} (the Coxeter groups of finite types A and B and affine types A and C), the disarray is bounded in terms of the \emph{Coxeter length} $\ell_S$:
\[
\frac{\dis(w)}{2} \leq \lS(w).
\]
This raises the following question.
\begin{question}\label{ques:main question}
    When is $\dis(w)/2$ equal to $\lS(w)$?
\end{question}
In this paper, we answer Question~\ref{ques:main question} in all unbranched George groups, giving a nice connection between the statistics $\dis$ and $\lS$ and a natural notion of pattern avoidance in this context. There are a handful of small cases whose behavior is different from that of general unbranched George groups. This is because the Dynkin diagrams in those small cases are, in a sense, degenerate; they have 
so few generators that the usual diagram shape ``collapses''; see Section~\ref{sec:groups} for details.  Our main result is the following theorem.

\begin{theorem}\label{main theorem}
    For any $w$ in an 
    unbranched George group of window size $n$, 
    the following are equivalent:
    \begin{enumerate}[label={\rm (\Alph*)}]
        \item $\dis(w)/2 = \lS(w)$,\label{conditions:length}
        \item in every reduced word for $w$, for all $i \in [n-1]$, every two copies of $i$ are separated by a copy of both $i-1$ and of $i+1$, and \label{conditions:reduced word new}
        \item $w$ globally avoids the pattern $321$.\label{conditions:pattern}
    \end{enumerate}
    \noindent Moreover, in the nondegenerate cases, the above properties are equivalent to 
    \begin{enumerate}[label={\rm (\Alph*$^\prime$)\negphantom{$^\prime$}},ref={\rm (\Alph*$^\prime$)}]\setcounter{enumi}{1}
        \item
        no reduced word for $w$ contains consecutive letters $i(i\pm 1)i$ for $i \in [n-1]$.\label{conditions:reduced word}
    \end{enumerate}
 \end{theorem}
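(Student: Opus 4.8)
The plan is to route everything through a local analysis of how $\dis$ changes as $w$ is built up along a reduced word. Building $w$ from the identity one generator at a time, multiplying on the right, I would use the description from \cite{LT} of $\dis(w)$ as a sum, over the ``walls'' of the window, of the number of window entries lying across each wall (with the natural symmetrization in the signed cases). A single right multiplication by a generator moves at most one entry across at most one wall, so each length-increasing step changes $\dis/2$ by $0$ or $1$, and it changes by $1$ exactly when the two entries it transposes lie on opposite sides of the relevant wall. Since $\dis(w)/2$ does not depend on the chosen reduced word, this reproves $\dis(w)/2 \le \lS(w)$ and shows that (A) holds if and only if every reduced word for $w$ has all of its steps ``across a wall'' (equivalently, some one of them does). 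I would begin by making this ``across a wall'' criterion completely explicit on window entries in each of the four families.

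The implications among (A), (B), and (B$'$) should then be reduced-word bookkeeping. The pivotal computation is that a reduced word containing a braid factor $i(i\pm 1)i$ (or a longer alternating factor at a double bond) cannot have all of its steps across walls: if the first letters of such a factor are across walls, the window entries become pinned down enough to force the next letter to keep both of its entries on the same side of its wall. This yields (A)$\Rightarrow$(B$'$) at once, and in the nondegenerate cases a companion argument gives the converse; the extra alternating factors that can occur at double bonds in the degenerate cases behave differently, which is exactly why (B$'$) has to be singled out there. The equivalence (B)$\Leftrightarrow$(B$'$) is more elementary: (B)$\Rightarrow$(B$'$) is immediate from the definitions, and for (B$'$)$\Rightarrow$(B) I would argue contrapositively, noting that a reduced word violating the separation condition at a letter $i$ contains a contiguous subword $i\,B\,i$ with $B$ omitting $i$ and at least one of $i\pm 1$, so that it lies in a parabolic subgroup of type $A$ or $B$ in which $i$ is an end node; an element of such a parabolic with a reduced word beginning and ending at an end node is not fully commutative, hence contains a forbidden factor $j(j\pm 1)j$, and this pulls back to a reduced word for $w$.

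What remains, and where I expect the real difficulty to lie, is the equivalence of these conditions with the global pattern condition (C). For (A)$\Rightarrow$(C) I would argue the contrapositive: given a global $321$ pattern $w(a)>w(b)>w(c)$, I would produce a reduced word for $w$ at some stage of which the two positions straddling a suitable wall both hold entries of the triple $\{w(a),w(b),w(c)\}$ lying on the same side of that wall, forcing a step that is not across a wall and hence $\dis(w)/2 < \lS(w)$. For (C)$\Rightarrow$(A), again contrapositively: if (A) fails then every reduced word has a step that is not across a wall, and from one such step --- by tracking where its two ``stranded'' entries came from and where the entry that belongs on the far side of the wall was displaced to --- one reads off a decreasing length-three subsequence of $w$, so $w$ contains a global $321$. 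The obstacle is carrying out this local-to-global dictionary uniformly across finite types $A$ and $B$ and affine types $A$ and $C$: in the affine cases a global pattern need not sit inside a single window, so it has to be normalized first, using the periodicity $w(j+p)=w(j)+p$, so that all of its indices lie near the base window before the local analysis can be applied, and in the signed cases the symmetry $w(-j)=-w(j)$ has to be respected throughout. With the nondegenerate theory in place, the finitely many degenerate diagrams --- where the path or cycle shape ``collapses'' and (B$'$) parts ways with the rest --- are handled by direct inspection.
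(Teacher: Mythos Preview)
Your wall-crossing framework is sound and underlies the paper's arguments as well: (A)$\Rightarrow$(B$'$) via a forced non-crossing step inside a braid factor is essentially the paper's subadditivity computation, your (B)$\Leftrightarrow$(B$'$) via commutations (packaged through a parabolic restriction) matches the paper's commutation argument, and your contrapositive plan for (C)$\Rightarrow$(A) is close to the paper's direct induction, which peels a descent from a $321$-avoiding $w$ and uses a crossing count (your ``entry on the far side of the wall'') to show the step must be crossing. Your version would additionally need that a $321$ found at an intermediate prefix persists up the weak order to $w$; this is true but should be stated.

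The genuine gap is in closing the cycle. You assert a ``companion argument'' for (B$'$)$\Rightarrow$(A), and separately a direct (A)$\Rightarrow$(C) by manufacturing a non-crossing step from a $321$ pattern, but neither is fleshed out and both are exactly where the substance lies. The paper does \emph{not} prove (B$'$)$\Rightarrow$(A) directly; it closes the cycle via (B$'$)$\Rightarrow$(C), i.e., contrapositively shows that a global $321$ forces some reduced word to contain $i(i\pm1)i$. For $S_n$, $S^B_n$, and $\affS_n$ this is imported wholesale from Billey--Jockusch--Stanley, Stembridge, and Green; for $\affS^C_n$ the paper supplies a new induction on the \emph{width} of a minimal-width $321$ instance, with a normalization via the diagram automorphism and a case analysis on whether the positions just inside the endpoints of the pattern lie in the same symmetry class (and, if so, whether they are mirror images or translates). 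Your proposed construction --- arrange that at some prefix two adjacent positions hold entries of the triple on the same side of a wall --- does not by itself force the \emph{next} letter of the reduced word to be the relevant generator, so it does not yet exhibit a non-crossing step; converting a $321$ pattern into an explicit reduced word with a witnessed defect is precisely the hard part, and your outline does not indicate how you would do it uniformly, especially in $\affS^C_n$.
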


In Section~\ref{sec:terminology}, we establish notation and terminology and present relevant background on the objects of study. The main theorem is proved in Section~\ref{sec:main theorem proof} by establishing a sequence of equivalences, first in the nondegenerate case and then in the degenerate case. We conclude in Section~\ref{sec:final remarks} by discussing related enumerative questions and by proving a uniform interpretation of the statistic $\dis$ in terms of root heights, and by posing some open questions.

\section{Terminology and previous results}\label{sec:terminology}

\subsection{The groups}\label{sec:groups}

The main objects of study in this paper are the Coxeter groups $S_n$, $S^B_n$, $\affS_n$, and $\affS^C_n$, of finite types A and B and affine types A and C.  As in \cite{LT}, we view these groups as permutation groups on subsets of $\ZZ$ that respect a natural symmetry group on their domain, and call them the \emph{unbranched George groups}.  (The terminology \emph{George group} was coined in \cite{EE}; the adjective \emph{unbranched} refers to their Coxeter--Dynkin diagrams, see Figure~\ref{fig:dynkin}.)  We will carry over the notation from that paper and the curious reader is encouraged to reference \cite[\S2]{LT} for a fuller background on these groups. 

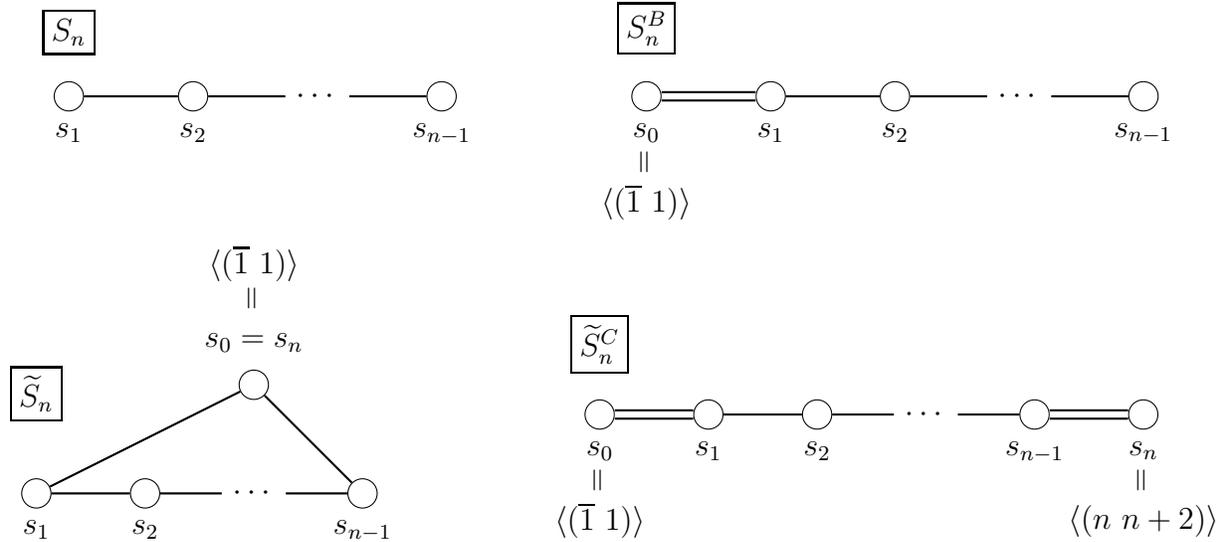
\begin{figure}
\begin{center}
\raisebox{0em}{\begin{tikzpicture}[v/.style={circle,draw}, node distance = 4em]
\node[v] (1)  {};
\node[v] (2) [right of = 1] {};
\node (3) [right of = 2] {$\cdots$} ;
\node[v] (5) [right of = 3] {};
\draw[thick] (1) 
node[below=0.1em] {$\begin{array}{c} s_1 \\ \rotatebox{90}{\phantom{=}} \\ \phantom{\langle (\overline{1} \ 1)\rangle} \end{array}$} 
-- (2) node[below=0.5em] {$s_2$} -- (3) -- 
(5) node[below=0.5em] {$s_{n-1}$}
    ;
    \draw (1) node[above=1em] {$\framebox{$S_n$}$};
\end{tikzpicture}}
\hfill
\begin{tikzpicture}[v/.style={circle,draw}, node distance = 4em]
\node[v] (0)  {};
\node[v] (1) [right of = 0] {};
\node[v] (2) [right of = 1] {} ;
\node (3) [right of = 2] {$\cdots$};
\node[v] (5) [right of = 3] {};
\draw[thick, double, double distance = .2em] (0) 
node[below=0.1em] {$\begin{array}{c} s_0 \\ \rotatebox{90}{=} \\ \langle (\overline{1} \ 1)\rangle \end{array}$} 
[double] -- (1) node[below=0.5em] {$s_1$}; 
\draw[thick] (1) -- (2) node[below=0.5em] {$s_2$} -- (3) -- 
(5) 
node[below=0.1em] {$\begin{array}{c} s_{n-1} \\ \rotatebox{90}{\phantom{=}} \\ \phantom{\langle (n \ n+2)\rangle} \end{array}$};    
    \draw (0) node[above=1em] {$\framebox{$S^B_n$}$};
\end{tikzpicture}

\begin{tikzpicture}[v/.style={circle,draw}, node distance = 3.5em]
\node[v] (1)  {};
\node[v] (2) [right of = 1] {};
\node (3) [right of = 2] {$\cdots$} ;
\node[v] (5) 
            [right of = 3] {};
\node[v] (6) [above of = 3] {};
\draw[thick] (1) node
    [below=0.5em] {$s_1$} -- (2) node[below=0.5em] {$s_2$} -- (3) -- 
(5) node
    [below=0.5em] {$s_{n - 1}$} -- (6) node[above=0.5em] 
    {$\begin{array}{c} \langle (\overline{1} \ 1)\rangle \\
    \rotatebox{90}{=} \\
    s_0 = s_n \end{array}$}
    -- (1);
    \draw (1) node[above=2em] {\framebox{$\affS_n$}};
\end{tikzpicture}
\hfill
\begin{tikzpicture}[v/.style={circle,draw}, node distance = 3.5em]
\node[v] (0)  {};
\node[v] (1) [right of = 0] {};
\node[v] (2) [right of = 1] {} ;
\node (3) [right of = 2] {$\cdots$};
\node[v] (5) [right of = 3] {};
\node[v] (6) [right of = 5] {};
\draw[thick, double, double distance = .2em] (0) 
node[below=0.1em] {$\begin{array}{c} s_0 \\ \rotatebox{90}{=} \\ \langle (\overline{1} \ 1)\rangle \end{array}$} 
[double]
-- (1) node[below=0.5em] {$s_1$}; 
\draw[thick] (1) -- (2) node[below=0.5em] {$s_2$} -- (3) -- 
(5) node[below=0.5em] {$s_{n - 1}$};
\draw[thick, double, double distance = .2em] (5) -- (6) 
node[below=0.1em] {$\begin{array}{c} s_n \\ \rotatebox{90}{=} \\ \langle (n \ n+2)\rangle \end{array}$};
    \draw (0) node[above=1em] {\framebox{$\affS^C_n$}};
\end{tikzpicture}
\end{center}
\caption{The Coxeter--Dynkin diagrams of the nondegenerate unbranched George groups. 
}
\label{fig:dynkin}
\end{figure}

Let $W$ be a George group.  Say that an integer $i$ is \emph{frozen} if $w(i) = i$ for all $w \in W$, and \emph{unfrozen} otherwise.  In particular, the unfrozen integers for the unbranched George groups are listed in Table~\ref{table:frozen elements}.  In the groups $S^B_n$ and $\affS^C_n$, the frozen values act as \emph{mirrors} around which the group elements exhibit the symmetries $w(-i) = -w(i)$ (in both $S^B_n$ and $\affS^C_n$) and $w(2(n + 1) - i) = 2(n + 1) - w(i)$ (in $\affS^C_n$), so that $\affS^C_n$ respects an action of the infinite dihedral group on $\ZZ$.  The groups $S_n$ and $\affS_n$ have no frozen values; the affine symmetric group $\affS_n$ respects the translation symmetry $w(i + n) = w(i) + n$ for all $i \in \ZZ$.

\begin{table}[htbp]
\begin{tabular}{c|c|c}
    Group & Unfrozen elements & Symmetries respected \\ \hline
    $S_n$ & $[n] := \{1, \ldots, n\}$ \\[0.5em]
    $S^B_n$ & $[\pm n] := \{-n, \ldots, -1, 1, \ldots, n\}$ & $w(-i) = -w(i)$ \\[0.5em]
    $\affS_n$ & $\ZZ$ & infinite cyclic: $w(i + n) = w(i) + n$\\[0.5em]
    $\affS^C_n$ & $\ZZ \smallsetminus (n + 1)\ZZ$ & infinite dihedral: $w(-i) = -w(i)$ and \\ &&$w(2(n + 1) - i) = 2(n + 1)-w(i)$
\end{tabular}
\caption{The unfrozen elements in unbranched George groups.}\label{table:frozen elements}
\end{table}

The symmetries respected by each of the groups $S_n$, $\affS_n$, $S^B_n$, $\affS^C_n$ divide the unfrozen values in the domain into $n$ equivalence classes that we call \emph{symmetry classes}.  If $W$ is a George group and $i$ and $j$ are integers such that there exists an element $w \in W$ such that $w(i) = j$ and $w(j) = i$, then we denote by $\langle (i \ j)\rangle$ the extension of the transposition $(i \ j)$ under the symmetry conditions of the group.  Every George group is generated naturally by a set of \emph{simple transpositions}.  In the unbranched George groups, these are as follows: all of the groups $S_n$, $\affS_n$, $S^B_n$, and $\affS^C_n$ contain the adjacent transpositions $s_1, \ldots, s_{n - 1}$ where $s_i = \langle (i \ i + 1)\rangle$.  The affine symmetric group $\affS_n$ contains one additional simple transposition, the adjacent transposition $s_0 = s_n = \langle (0 \ 1)\rangle = \langle (n \ n + 1)\rangle$.  These elements swap two adjacent integers that belong to different symmetry classes.  The groups $S^B_n$ and $\affS^C_n$ contain a second flavor of simple transposition, namely, the ``sign-change transpositions'' $s_0 = \langle (-1 \ 1)\rangle$ (in $S^B_n$ and $\affS^C_n$) and $s_n = \langle (n \ n + 2)\rangle$ (only in $\affS^C_n$) that swap two integers in the same symmetry class across a mirror.  The set of simple transpositions for $W$ form a Coxeter generating set, with the relations encoded by their Coxeter--Dynkin diagram in the usual way.

\begin{definition}\label{defn:reduced words}
Fix a George group $W$ and an element $w \in W$. The \emph{reduced words} of $w$ are the words $i_1\cdots i_{\ell}$ such that $w$ is equal to the product $s_{i_1} \cdots s_{i_{\ell}}$ of generators of $W$ and $\ell = \lS(w)$.
\end{definition}

As suggested in Section~\ref{sec:introduction}, some of the unbranched George groups are small enough that they lose their type-specific properties. We consider those groups to be \emph{degenerate}, and make the following definition.

\begin{definition}\label{defn:unicorn}
    The \emph{nondegenerate unbranched George groups} are $S_n$ for all $n \ge 1$, $S_n^B$ for all $n \ge 1$, $\affS_n$ for all $n \ge 3$, and $\affS^C_n$ for all $n \ge 2$. 
\end{definition}

The omitted nontrivial cases are $\affS_2$ and $\affS^C_1$, both of which are isomorphic to the infinite dihedral group (see Figure~\ref{fig:affS2 Dynkin}).

\begin{remark}\label{rem:weird ones}
 It will be shown in Section~\ref{sec:weird ones} that Theorem~\ref{main theorem} holds in the degenerate cases for a simple reason: in both $\affS_2$ and $\affS^C_1$, \emph{every} element $w$ satisfies conditions~\ref{conditions:length}, \ref{conditions:reduced word new}, and~\ref{conditions:pattern} of the theorem. 
 \end{remark}

\subsection{Pattern avoidance}

The statement of our main theorem involves a natural type of pattern avoidance that, to our knowledge, has not previously been studied in this level of generality. 
Say that two sequences $a_1 \cdots a_n$ and $b_1 \cdots b_n$ of integers are \emph{order-isomorphic} if for all $i, j$, we have $a_i < a_j$ if and only if $b_i < b_j$.

\begin{definition}\label{defn:global pattern}
    Fix a permutation $p \in S_k$. An (affine) (signed) permutation $w$ \emph{globally contains} the pattern $p \in S_k$ if there exist unfrozen integers $i_1 < \cdots < i_k$ such that $w(i_1)\cdots w(i_k)$ is order-isomorphic to $p$. Otherwise $w$ \emph{globally avoids} the pattern $p$.
\end{definition}

For example, adopting the convention of representing elements of $S^B_n$ in one-line notation as $w(1)\cdots w(n)$ and writing $\ol{i}$ to represent $-i < 0$, we have that the signed permutation $w = \ol{1} \ol{2} \in S^B_2$ globally contains the pattern $321$ because $w(-1)  = 1 > w(1) = -1 > w(2) = -2$.

In the case of the symmetric group $S_n$, global pattern containment/avoidance is exactly the usual notion of pattern containment/avoidance for permutations.  In the affine symmetric group $\affS_n$, global pattern avoidance was introduced by Green \cite{Green}.  On the other hand, the ``classical'' notion of pattern avoidance/containment in $S^B_n$ is different from the global one in important ways: it allows the pattern itself to be signed, signs are taken into account when testing containment, and the pattern must be found among the positive positions of the permutation -- not among any positions, as is the case in the global setting.

\begin{definition}\label{defn:classical pattern}
Fix a signed permutation $p \in S^B_k$. A signed permutation $w$ \emph{(classically) contains} the pattern $p$ if there exist integers $1 \le i_1 < \cdots < i_k \le n$ with
\begin{itemize}
    \item $|w(i_1)| \, \cdots \, |w(i_k)|$ order-isomorphic to $|p(1)| \, \cdots \, |p(k)|$, and
    \item $w(i_j) \cdot p(j) > 0$ for all $j$.
\end{itemize}
Otherwise $w$ \emph{(classically) avoids} the pattern $p$.
\end{definition}

When $p \in S_k \subset S^B_k$ and $w \in S_n \subset S^B_n$, Definition~\ref{defn:classical pattern} coincides with the usual notion of pattern containment/avoidance.

\subsection{Previous work}

The main result of this paper is the equivalence of three properties: equality of $\dis(w)/2$ and $\lS(w)$, global avoidance of $321$, and a property of reduced words. That a permutation's global patterns are related to its reduced words is already known in the cases of $S_n$ and $\affS_n$.

\begin{proposition}[\cite{BJS, Green}]
    \label{prop:BJS and Green}
    If $w$ is either a permutation in $S_n$ or an affine permutation in $\affS_n$ then the following are equivalent:
    \begin{itemize}
        \item $w$ globally avoids the pattern $321$,
        \item $w$ is fully commutative, and
        \item no reduced word for $w$ contains consecutive letters $i(i\pm1)i$ for $i \in [n-1]$. 
    \end{itemize}
\end{proposition}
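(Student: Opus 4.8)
The plan is to prove the three‑way equivalence by routing both outer conditions through the reduced‑word condition. First I would show that $w$ is fully commutative (i.e.\ all its reduced words are related by commutation moves alone) if and only if no reduced word for $w$ contains a consecutive factor $s_is_js_i$ with $s_i,s_j$ adjacent in the Coxeter--Dynkin diagram -- which, in $S_n$ and in $\affS_n$ for $n\ge 3$, is exactly the condition ``no reduced word contains $i(i\pm1)i$''. Then I would show this reduced‑word condition is equivalent to global $321$‑avoidance. I would carry out the details for $S_n$ and indicate the (routine but fiddly) modifications for $\affS_n$, deferring the affine bookkeeping to Green \cite{Green}.

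For the first step, recall that in these groups every braid relation is either a commutation $s_is_j=s_js_i$ (for $s_i,s_j$ nonadjacent) or the length‑three relation $s_is_js_i=s_js_is_j$ (for $s_i,s_j$ adjacent), and that by the word property any two reduced words for $w$ are joined by a chain of such moves. If some reduced word of $w$ has a consecutive factor $s_is_js_i$ with $s_i,s_j$ adjacent, applying the braid move produces another reduced word for $w$ whose multiset of letters differs (one fewer $i$, one more $j$); since commutations preserve the letter multiset, these reduced words lie in different commutation classes and $w$ is not fully commutative. Conversely, if $w$ is not fully commutative then the graph on the reduced words of $w$ using only commutations is disconnected while the graph using all moves is connected, so some braid move joins two commutation classes; that move is applied at a consecutive factor $s_is_js_i$ of some reduced word for $w$. (It is precisely here that ``$i(i\pm1)i$'' must be read as ``$s_i$ and $s_{i\pm1}$ adjacent'', which holds throughout the nondegenerate range.)

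For the second step I would use the wiring diagram of a reduced word $a_1\cdots a_\ell$: draw $n$ strands and, reading left to right, cross the strands in positions $a_t$ and $a_t+1$ at step $t$; then reducedness is equivalent to ``no two strands cross twice'', and the strand starting in row $i$ finishes in row $w(i)$ for a suitable convention (since $321=321^{-1}$, the convention is immaterial). If a reduced word has a consecutive factor $s_is_{i+1}s_i$, the three strands occupying rows $i,i+1,i+2$ just before it cross \emph{pairwise} inside it, and by reducedness these are their only mutual crossings, so these strands enter these rows in one vertical order and leave in the reverse order, never crossing one another elsewhere; labelling them by their starting rows $p<q<r$ gives $w(p)>w(q)>w(r)$, a global $321$‑pattern (the factor $s_{i+1}s_is_{i+1}$ is symmetric). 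For the converse I would induct on $\lS(w)$: given a global $321$‑occurrence $i_1<i_2<i_3$ with $w(i_1)>w(i_2)>w(i_3)$ and $i_3-i_1$ minimal, if $i_3=i_1+2$ then $w$ has descents at $i_2-1$ and $i_2$, one checks $\lS(ws_{i_2-1}s_{i_2}s_{i_2-1})=\lS(w)-3$, and hence $w$ has a reduced word ending in $(i_2-1)\,i_2\,(i_2-1)$; otherwise, say $i_3>i_2+1$, and minimality forces $w(i_3-1)>w(i_2)$, so $w$ has a descent at $i_3-1$ while $ws_{i_3-1}$ is shorter and still contains the occurrence $i_1<i_2<i_3-1$, and appending the letter $i_3-1$ to a reduced word supplied by the inductive hypothesis preserves the braid factor (the case $i_2-i_1\ge2$ is symmetric, using a descent at $i_1$).

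The main obstacle is the affine bookkeeping. The wiring diagram must be taken on a cylinder, ``reducedness $=$ no two strands cross twice'' and the length/descent computations must be phrased with generator indices read modulo $n$, and in the inductive step one must check that right‑multiplication by a generator does not disturb the chosen $321$‑occurrence through the periodicity of $\affS_n$ -- it helps to first reduce to an occurrence of spread less than $n$. These are exactly the technical points handled in Green's treatment, so I would either reproduce them in the cylindrical model or cite \cite{Green} for the affine case, as the proposition already does. The only other friction is expository: fixing conventions carefully enough that ``the strand starting in row $i$ ends in row $w(i)$'' is literally correct.
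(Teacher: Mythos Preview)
The paper does not give its own proof of this proposition: it is quoted as a known result, attributed to \cite{BJS} for $S_n$ and to \cite{Green} for $\affS_n$, and used as input for the remainder of the paper.  Your sketch is correct and is essentially the standard argument from those references---the equivalence of full commutativity with the absence of a long-braid factor follows in simply-laced type directly from Matsumoto's word property exactly as you say, and the link to $321$-avoidance via pairwise-crossing strands in a wiring diagram together with the minimal-spread induction on length is the Billey--Jockusch--Stanley argument, with the cylindrical bookkeeping deferred to Green just as the paper itself does.
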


Cases of the third bullet point in Proposition~\ref{prop:BJS and Green} may be moot (e.g., reduced words for permutations in $S_n$ do not use the letters $0$ or $n$, so cannot contain factors $101$ or $(n-1)n(n-1)$); we present the result in this format in order to subsequently draw parallels with other groups.

Moreover, $\dis(w)/2 = \lS(w)$ is already known to be equivalent to these properties in case of $S_n$.

\begin{proposition}[{\cite[Theorem~1.4]{PT}}]\label{prop:PT}
   For $w \in S_n$, the following are equivalent:
   \begin{itemize}
       \item $\dis(w)/2 = \lS(w)$ and
       \item $w$ (globally) avoids $321$.
   \end{itemize}
\end{proposition}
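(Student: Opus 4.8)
The plan is to work entirely inside $S_n$, where $\lS(w)$ is the number of inversions of $w$ and the bound $\dis(w)/2 \le \lS(w)$ from \cite{LT} specializes to a statement I will reprove with an explicit equality criterion. The first step is to record two elementary reformulations. Writing $D = \{j : w(j) < j\}$ for the set of (strict) deficiencies, the identity $\sum_i (w(i) - i) = 0$ gives
\[
\frac{\dis(w)}{2} = \sum_{j \in D} (j - w(j)),
\]
while, grouping inversions by their right endpoint, $\lS(w) = \sum_{j} c_j$ with $c_j := \#\{i < j : w(i) > w(j)\}$.

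Next I would prove a per-position inequality that yields both the bound and its equality case. For any $j$, the positions $i < j$ split according to whether $w(i)$ exceeds or falls below $w(j)$, so $c_j = (j-1) - \#\{i < j : w(i) < w(j)\} \ge (j-1) - (w(j)-1) = j - w(j)$, since at most $w(j)-1$ values lie below $w(j)$; equality holds exactly when all $w(j)-1$ of those values occupy positions before $j$. Summing over $j \in D$ (and discarding the nonnegative contributions $c_j$ for $j \notin D$) reproves $\lS(w) \ge \dis(w)/2$. Reading off the equality conditions, $\dis(w)/2 = \lS(w)$ holds if and only if (1) $c_j = 0$ for every $j \notin D$ --- no inversion has a weak-excedance right endpoint --- and (2) for every $j \in D$, all values smaller than $w(j)$ precede position $j$.

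It then remains to identify (1)$\wedge$(2) with $321$-avoidance. The crux is a single lemma: if $w$ avoids $321$ and $(i,j)$ is an inversion, then every value smaller than $w(j)$ occurs before position $j$ --- otherwise such a value, say at position $p > j$, would complete a $321$ pattern with $w(i) > w(j) > w(p)$. From this lemma, counting the $w(j)-1$ small values together with $w(i) > w(j)$ among positions $1, \dots, j-1$ forces $w(j) \le j-1$, so the right endpoint of every inversion is a deficiency (giving (1)); and since no deficiency is a left-to-right maximum, each $j \in D$ is the right endpoint of some inversion, whence the lemma gives (2). For the converse I would argue contrapositively: given a $321$ pattern $w(i_1) > w(i_2) > w(i_3)$, the inversion $(i_1, i_2)$ violates (1) if $w(i_2) \ge i_2$, while if $w(i_2) < i_2$ the value $w(i_3)$ sits after position $i_2$ and violates (2).

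I expect the main obstacle to be the equality analysis rather than the inequality: the bound $\dis/2 \le \lS$ is soft, but pinning down \emph{both} equality conditions (1) and (2) in a form that matches $321$-avoidance requires the right bookkeeping --- in particular the observation that grouping inversions by right endpoint, rather than by the displaced value, is what makes the per-deficiency slack $c_j - (j - w(j))$ transparent. Once the key lemma is in hand, both directions are short; the risk is choosing a less convenient decomposition (for example, a crossing/level-matching argument) that proves the inequality cleanly but obscures the equality case.
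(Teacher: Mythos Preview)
Your argument is correct. The per-position inequality $c_j \ge j - w(j)$, the identification of the equality conditions (1) and (2), and the two short deductions linking (1)$\wedge$(2) to $321$-avoidance are all sound; in particular the observation that a deficiency cannot be a left-to-right maximum (by pigeonhole on values) cleanly supplies the inversion needed to invoke your lemma at each $j \in D$.

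However, your approach is quite different from the paper's. The paper does not prove this proposition directly --- it is quoted from \cite{PT} in the ``Previous work'' section --- but the general machinery of Section~3 reproves it as the $S_n$ case of the main theorem, and that argument proceeds along entirely different lines. There, the implication ``$321$-avoiding $\Rightarrow$ equality'' is proved by induction on Coxeter length: one peels off a right descent $s$ and checks that $\dis(ws) = \dis(w) - 2$ via a small case analysis (Theorem~\ref{thm:321-free implies equality}). The converse is obtained indirectly through reduced words: a $321$ pattern forces a factor $i(i\pm 1)i$ in some reduced word (citing \cite{BJS} for $S_n$), and then subadditivity of $\dis$ together with $\dis(s_i s_{i\pm 1} s_i)/2 = 2 < 3$ yields the strict inequality (Theorems~\ref{thm: reduced word implies 321-free} and~\ref{thm: equality implies reduced word}). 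Your proof is more elementary and self-contained --- it needs no reduced-word theory, no external characterization of $321$-avoidance, and no induction on length --- because it exploits the explicit inversion count available in $S_n$. The trade-off is that the paper's route, while less direct here, is what generalizes uniformly to $S^B_n$, $\affS_n$, and $\affS^C_n$, where there is no comparably clean decomposition of $\lS$ by ``right endpoint.''
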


Combining Propositions~\ref{prop:BJS and Green} and~\ref{prop:PT} gives the equivalence of these three properties in the symmetric group, foreshadowing our main result.

The result for signed permutations builds on work of Stembridge, characterizing so-called \emph{fully commutative top elements}. The larger context of these objects is not relevant to the present work, and the reader is referred to \cite{Stembridge} for further details.

\begin{lemma}[{\cite[Theorem~2.1]{Stembridge96} and \cite[Theorem~4.1 and Corollary~5.6]{Stembridge}}]\label{lem:fc top characterization}
    For $w \in S^B_n$, the following are equivalent:
    \begin{itemize}
        \item $w$ is a fully commutative top element,
        \item no reduced word for $w$ contains consecutive letters $i(i\pm 1)i$ for $i \in [n-1]$, and 
        \item $w$ classically avoids the patterns in the set 
    $$ \mathcal{P} := \big\{1 \ol{2} \, , \ \ol{1} \ol{2} \, , \ 321 \, , \ 32\ol{1} \, , \ \ol{3}21 \, , \ \ol{3}2\ol{1}\big\}.$$
    \end{itemize}
\end{lemma}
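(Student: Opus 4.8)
The plan is to obtain the three-way equivalence by assembling Stembridge's cited results, the substantive work being to reconcile his conventions with the ones used here; I also indicate a more self-contained route. Label the three conditions (i) (being a fully commutative top element), (ii) (every reduced word avoiding the factor $i(i\pm1)i$), and (iii) (classical avoidance of $\mathcal{P}$), in the order listed.

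For (i)$\Leftrightarrow$(ii): recall that the fully commutative elements of a Coxeter group are those whose reduced words form a single class under the commutation moves $st=ts$, and that \cite[Theorem~2.1]{Stembridge96} characterizes them as the elements no reduced word of which contains a factor $\underbrace{sts\cdots}_{m(s,t)}$ for a non-commuting pair $s\ne t$. In $S^B_n$ the only bonds are $m(s_0,s_1)=4$ and $m(s_i,s_{i+1})=3$ for $1\le i\le n-2$, so the fully commutative elements are exactly those avoiding the factors $0101$, $1010$, and $j(j\pm1)j$ for $1\le j\le n-1$ with $j\pm1\ge 1$. The \emph{top} elements are the distinguished subclass isolated in \cite{Stembridge}; unwinding that definition --- or quoting \cite[Theorem~4.1]{Stembridge} directly --- shows that a fully commutative element of $S^B_n$ is a top element exactly when, additionally, no reduced word contains the factor $101 = s_1 s_0 s_1$. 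Since any word containing $0101$ or $1010$ also contains $101$, forbidding $101$ together with the $m=3$ braid factors is precisely condition (ii), and (i)$\Leftrightarrow$(ii) follows once Stembridge's labeling of the simple reflections is matched to ours.

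For (i)$\Leftrightarrow$(iii): here I would invoke the classification \cite[Corollary~5.6]{Stembridge} of fully commutative top elements by avoided signed patterns and check that, under the conventions of Definition~\ref{defn:classical pattern}, the resulting forbidden list is exactly $\mathcal{P} = \{1\ol{2}, \ol{1}\ol{2}, 321, 32\ol{1}, \ol{3}21, \ol{3}2\ol{1}\}$. The four length-three patterns share a common shape (strictly decreasing absolute values with the middle entry positive), so avoiding all of them says that $w$ has no positions $i_1 < i_2 < i_3$ in $[n]$ with $|w(i_1)| > |w(i_2)| > |w(i_3)|$ and $w(i_2) > 0$; likewise avoiding $1\ol{2}$ and $\ol{1}\ol{2}$ says that $w$ has no positions $i_1 < i_2$ in $[n]$ with $|w(i_1)| < |w(i_2)|$ and $w(i_2) < 0$. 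One checks the equivalence with (ii) on the smallest cases --- $s_1 s_0 s_1 = 1\,\ol{2}$ contains the pattern $1\ol{2}$ and is not top, while $s_0 s_1 s_0 = \ol{2}\,\ol{1}$ avoids $\mathcal{P}$ and is top --- and in general relates such a configuration of positions and signs to the heap of $w$, producing a $101$ or $j(j\pm1)j$ factor in some reduced word, and conversely.

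The main obstacle is the bookkeeping around conventions: Stembridge works in type $B_n$ (equivalently $C_n$) with a possibly different labeling of the generators and a notion of signed-pattern containment that differs from Definition~\ref{defn:classical pattern}, and ``top'' is defined there structurally --- via heaps, or via a canonical factorization of fully commutative elements --- rather than by a forbidden factor, so one must confirm that the structural notion really does coincide with $101$-avoidance. A self-contained alternative would prove (i)$\Leftrightarrow$(ii) from the combinatorics of heaps of fully commutative elements in type $B$, and then (ii)$\Leftrightarrow$(iii) by the explicit-reduced-word and longest-decreasing-subsequence argument that handles $S_n$ in Proposition~\ref{prop:BJS and Green}; in that approach the delicate point is that a decreasing triple of values in a signed permutation can straddle the mirror at $0$, so forcing a braid factor requires a careful case analysis on the signs and positions involved.
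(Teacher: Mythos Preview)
The paper gives no proof of this lemma: it is stated purely as a citation of Stembridge's results, with no argument or even a sketch of how the cited theorems combine. Your proposal therefore supplies connective tissue that the paper simply omits, and there is nothing in the paper to compare it against.

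That said, your outline is correct and matches what one would need to verify the citation. The decomposition for (i)$\Leftrightarrow$(ii) is exactly right: full commutativity in $S^B_n$ forbids the $m=3$ braid factors together with the length-four factors $0101$ and $1010$, while the ``top'' condition additionally forbids $101$; since $101$ is a substring of both length-four factors, the conjunction collapses to condition (ii) (and, correctly, does \emph{not} forbid $010$, since $0\notin[n-1]$). Your reading of the six patterns in $\mathcal{P}$ --- the four length-three ones sharing the shape ``$|\cdot|$ strictly decreasing, middle entry positive'' and the two length-two ones sharing ``$|\cdot|$ increasing, second entry negative'' --- and your small worked examples are accurate. As you flag, the only real work is reconciling Stembridge's labeling and pattern conventions with those used here; the paper implicitly asserts that this reconciliation has been carried out without showing it.
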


This allows us to give an almost identical reformulation of Proposition~\ref{prop:BJS and Green} in the case of type B, relating patterns and reduced words in that setting.  A similar statement was asserted in \cite[Chapter~11]{FanThesis}, see Remark~\ref{rem:fan remark}.

\begin{proposition}
    \label{prop:Stembridge}
    If $w$ is a signed permutation in $S^B_n$, then the following are equivalent:
    \begin{itemize}
        \item $w$ globally avoids the pattern $321$,
         \item $w$ is a fully commutative top element (i.e., $w$ classically avoids elements of $\mathcal{P}$), and
       \item no reduced word for $w$ contains consecutive letters $i(i\pm1)i$ for $i \in [n-1]$.
    \end{itemize}  
\end{proposition}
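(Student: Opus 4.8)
The plan is to obtain the equivalence of the second and third bullet points directly from Lemma~\ref{lem:fc top characterization}, so that the only new work is the equivalence of ``$w$ globally avoids $321$'' with ``$w$ classically avoids every pattern in $\mathcal{P}$''. I would prove both implications in contrapositive form, converting a forbidden configuration of one flavor into a forbidden configuration of the other. The recurring tool is the symmetry $w(-i)=-w(i)$: a position $i$ carrying a negative value can be traded for the position $-i$, which carries the positive value $|w(i)|$ and lies to the left of every positive position.

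For the implication ``a classical occurrence of some $p\in\mathcal{P}$ forces a global $321$'', I would just run through the six patterns. A classical $321$ or $32\ol{1}$ at positive positions $a<b<c$ is already a global $321$ at $a<b<c$ (the absolute-value condition in $32\ol{1}$ is not even needed). A classical $\ol{3}21$ or $\ol{3}2\ol{1}$ at $a<b<c$ becomes a global $321$ at $-a<b<c$. A classical $1\ol{2}$ at $a<b$ becomes a global $321$ at $-b<a<b$, and a classical $\ol{1}\ol{2}$ at $a<b$ becomes a global $321$ at $-b<-a<b$. This direction is a short finite check.

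For the converse, suppose $j_1<j_2<j_3$ are unfrozen integers with $w(j_1)>w(j_2)>w(j_3)$. The collection of such triples is stable under $(j_1,j_2,j_3)\mapsto(-j_3,-j_2,-j_1)$, which sends a triple with $k$ negative entries to one with $3-k$; so I may assume at most one of $j_1,j_2,j_3$ is negative, leaving only the cases ``all three positive'' and ``$j_1<0<j_2<j_3$''. Writing $a=|j_1|$, $b=|j_2|$, $c=|j_3|$ and translating $w(j_1)>w(j_2)>w(j_3)$ into inequalities (with signs) among $w(a),w(b),w(c)$, I would then argue case by case, organized first by the signs of $w(a),w(b),w(c)$ and, where those do not already settle it, by the relative order of $a,b,c$, which after folding need no longer mirror $j_1<j_2<j_3$. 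In many branches a length-two pattern of $\mathcal{P}$ appears immediately ($1\ol{2}$ when a positive value precedes a negative one of larger absolute value, $\ol{1}\ol{2}$ when two negative values are increasing); otherwise a three-letter pattern of $\mathcal{P}$ does the job, with the caveat that in a handful of sub-branches the natural three-letter candidate ($\ol{3}21$ or $\ol{3}2\ol{1}$) narrowly fails one absolute-value inequality and one must fall back on $1\ol{2}$ or $\ol{1}\ol{2}$. One bookkeeping subtlety to flag: the folded positions $a,b,c$ can collide (when $j_1=-j_2$ or $j_1=-j_3$), in which case only two distinct positions survive and one runs the argument with those.

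I expect this converse to be the main obstacle: it is the only place a genuine computation occurs, and it demands a disciplined sweep through the sign patterns together with attention to the degenerate sub-cases where folded positions coincide or a three-letter pattern just barely misses. The forward direction and the appeal to Lemma~\ref{lem:fc top characterization} are routine.
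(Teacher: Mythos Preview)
Your proposal is correct and follows essentially the same approach as the paper. Both invoke Lemma~\ref{lem:fc top characterization} for the equivalence of the last two bullets, then establish the equivalence of global $321$-avoidance with classical avoidance of $\mathcal{P}$ by proving both contrapositives: the forward direction is a short check through the six patterns (your choices of global $321$ witnesses match the paper's almost exactly), and the converse uses the symmetry $(j_1,j_2,j_3)\mapsto(-j_3,-j_2,-j_1)$ to reduce to at most one negative position, followed by case analysis. The only organizational difference is that the paper first separates the collision case $\#\{|j_1|,|j_2|,|j_3|\}=2$ and then branches on the signs of the \emph{positions}, whereas you propose branching first on the signs of the \emph{values} $w(a),w(b),w(c)$ and treating collisions as a bookkeeping caveat; both orderings traverse the same finite set of sub-cases.
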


\begin{proof}
First, we show that a signed permutation $w$ globally avoids $321$ if and only if it classically avoids the elements of $\mathcal{P}$ defined in Lemma~\ref{lem:fc top characterization}. 
Let $w$ be a signed permutation.

If $w$ classically contains an element of $\mathcal{P}$, then $w$ necessarily globally contains $321$: if $w(i)w(j)$ forms a 
$1\ol{2}$- or $\ol{1}\ol{2}$-pattern 
(so, in particular, $0 < i < j$), then $w(-j)w(i)w(j)$ is a global $321$-pattern; and if $w(h)w(i)w(j)$ forms a $321$-, $32\ol{1}$-, $\ol{3}21$-, or $\ol{3}2\ol{1}$-pattern 
(again, $0 < h < i < j$), then $\max\{w(h),w(-h)\} w(i)w(j)$ is a global $321$-pattern.

Now suppose that $w(h)w(i)w(j)$ is a global $321$-pattern; that is,
$$h < i < j \hspace{.25in} \text{and} \hspace{.25in} w(h) > w(i) > w(j).$$

There are two cases to consider. First suppose that $\#\{|h|, |i|, |j|\} = 2$, and hence $j > 0$. Then either $|h| \in \{i, j\}$, in which case $w(|i|)w(j)$ is either a $1\ol{2}$-pattern or a $\ol{1}\ol{2}$-pattern; 
or $h < i = - j$, in which case $w(j)w(|h|)$ is a $\ol{1}\ol{2}$-pattern.

Now suppose that 
$\#\{|h|, |i|, |j|\} = 3$. We break the argument into cases based on the signs of $h < i < j$. The symmetry of signed permutations means that $w(-j)w(-i)w(-h)$ is also a global $321$-pattern, reducing four cases to two.

\begin{itemize}
    \item \framebox{$h > 0$}\\
        If $w(i) < 0$, then $w(i)w(j)$ is a $\ol{1}\ol{2}$-pattern. 
        Now suppose that $w(i) > 0$. If $w(i) > |w(j)|$, then $w(h)w(i)w(j)$ is a $321$- or $32\ol{1}$-pattern; 
        otherwise, $w(i)w(j)$ is a $1\ol{2}$-pattern.
    \item \framebox{$h < 0 < i$}\\
        If $|w(i)| < |w(j)|$, then $w(i)w(j)$ is a $1\ol{2}$- or $\ol{1}\ol{2}$-pattern. 
        Otherwise, $|w(i)| > |w(j)|$ and so $w(i) > 0$.  If $|h| > i$ then $w(i)w(|h|)$ is a $1\ol{2}$-pattern. 
        Otherwise, $|h| < i$ and $w(|h|)w(i)w(j)$ is a $\ol{3}21$- or $\ol{3}2\ol{1}$-pattern. 
    \item \framebox{$i < 0 < j$}
        Apply the previous argument to $w(-j)w(-i)w(-h)$.
    \item \framebox{$j < 0$}
        Apply the first argument to $w(-j)w(-i)w(-h)$.
\end{itemize}

Finally, satisfying the reduced word condition follows from Lemma~\ref{lem:fc top characterization}.
\end{proof}

\section{Main results}\label{sec:main theorem proof}

Our goal is to prove the equivalence of the conditions listed in Theorem~\ref{main theorem}. We will do this in a sequence of steps, sometimes showing an implication in the contrapositive form. To begin, we prove a result that will simplify some of the subsequent arguments.

\begin{proposition}
\label{prop:crossing}
    Let $W$ be an unbranched George group $W$, $w \in W$ any element, and $i$ any unfrozen integer.  Then the number of positions $j \in (-\infty, i]$ for which $w(j) > i$ is equal to the number of positions $j \in (i, +\infty)$ for which $w(j) \leq i$ (and both are finite).
\end{proposition}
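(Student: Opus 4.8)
The plan is to interpret both quantities as measuring the same thing — a ``flux'' across the cut between positions $\le i$ and positions $> i$ — and to control finiteness using the symmetry of the group. Write $L = (-\infty, i]$ and $R = (i, +\infty)$ for the two halves of the domain, and let $L' = \{m : m \le i\}$ and $R' = \{m : m > i\}$ be the same partition of the codomain (the relevant integers being the unfrozen ones, with frozen values fixed so irrelevant). Because $w$ is a bijection on the unfrozen integers, for any finite set of positions the number of positions in $L$ whose image lies in $R'$ must equal the number of positions in $R$ whose image lies in $L'$: this is just the statement that $w$ cannot create or destroy elements, so whatever leaves $L$ must be balanced by whatever enters it. The only subtlety is that $L$ is infinite, so I first need to know that only finitely many positions are ``displaced'' across the cut at all.

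The key step is therefore finiteness. For $S_n$ and $S^B_n$ this is immediate since the window is finite and all but finitely many integers are frozen. For the affine groups $\affS_n$ and $\affS^C_n$ I would argue as follows: $w$ agrees with a translation (in $\affS_n$) or with an affine reflection symmetry (in $\affS^C_n$) on its action, so $w(j) - j$ is a bounded function of $j$ — indeed it is periodic with period $n$ in the $\affS_n$ case and quasi-periodic in the $\affS^C_n$ case. Hence there is a constant $C$ (depending only on $w$) with $|w(j) - j| \le C$ for all $j$. Consequently, if $j \le i - C$ then $w(j) \le j + C \le i$, so $j$ contributes nothing to the first count; and if $j > i + C$ then $w(j) \ge j - C > i$, so $j$ contributes nothing to the second count. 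Thus both counts range over the finite window $(i - C, i + C]$, establishing finiteness.

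With finiteness in hand, the equality is a bijective bookkeeping argument. Consider the finite interval $I = \{i - C + 1, \dots, i + C\}$ from the previous paragraph (enlarging $C$ if necessary so that $w(I) \subseteq \{i - C + 1, \dots, i + C\}$ as well, which is possible since $w$ moves points by at most $C$ and $w^{-1}$ likewise). Then $w$ restricts to a bijection of $I$ with itself. Partition $I$ into $I_L = I \cap L$ and $I_R = I \cap R$; since $w|_I$ is a bijection, $|w(I_L) \cap R'| + |w(I_L) \cap L'| = |I_L| = |I \cap L'| = |w(I_L)\cap L'| + |w(I_R) \cap L'|$, where the middle equality uses $|I\cap L| = |I \cap L'|$ (both halves of the symmetric interval $I$ about the cut have the same size). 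Cancelling $|w(I_L) \cap L'|$ gives $|w(I_L) \cap R'| = |w(I_R) \cap L'|$, which is exactly the claimed equality. I expect the only real obstacle to be getting the boundedness constant $C$ and the choice of the stable window $I$ stated cleanly in a way that handles all four groups uniformly; the combinatorial count itself is routine once the setup is right.
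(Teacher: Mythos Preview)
Your finiteness argument is correct: bounded displacement (periodic in the affine cases) confines all ``crossings'' to a finite window.  The gap is in the next step.  You assert that, by enlarging $C$, one may arrange $w(I) = I$ for a finite interval $I$, justifying this by bounded displacement of $w$ and $w^{-1}$.  But bounded displacement does \emph{not} imply the existence of any finite $w$-invariant interval: the shift $j \mapsto j+1$ on $\ZZ$ has displacement $1$ and fixes no finite set.  More to the point, there are genuine elements of $\affS_n$ with no finite invariant interval at all.  For instance, take $w \in \affS_3$ with window $[w(1),w(2),w(3)] = [3,-1,4]$; one checks (using periodicity, so only $a \in \{1,2,3\}$ need be tested) that for every integer $a$ there is some $j \geq a$ with $w(j) < a$, so $w([a,\infty)) \neq [a,\infty)$ for any $a$, and hence $w$ stabilizes no finite interval of length $\geq C_0$.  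Without $w(I) = I$, your chain of equalities breaks at $|I \cap L'| = |w(I_L)\cap L'| + |w(I_R)\cap L'|$: the right side is $|w(I)\cap L'|$, and showing this equals $|I\cap L'|$ is equivalent to the very equality $a_i(w)=b_i(w)$ you are trying to prove.  The missing ingredient is the ``zero-average-displacement'' constraint built into $\affS_n$ (and the mirror symmetries in $\affS^C_n$), which rule out shifts; bounded displacement alone is not enough.

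The paper avoids this difficulty entirely by arguing differently: it proves $a_i(w)=b_i(w)$ by induction on $\lS(w)$, checking the base case (identity) and then tracking how $a_i$ and $b_i$ change when one right-multiplies by a simple reflection $s$ (either both are unchanged, or both increase by $1$).  This makes implicit use of the group structure rather than trying to find an invariant window.
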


We remark that, since $S^D_n \subset S^B_n$ and $\affS^D_n \subset \affS^B_n \subset \affS^C_n$, in fact the result is true for all George groups.  Proposition~\ref{prop:crossing} is implicit in \cite{EE}; we provide a proof whose details will be useful later.

\begin{proof}[Proof of Proposition~\ref{prop:crossing}]
Given an unbranched George group $W$, an (affine) (signed) permutation $w \in W$, and any unfrozen integer $i$, let $a_i(w)$ be the number of positions $j \in (-\infty, i]$ for which $w(j) > i$, and let $b_i(w)$ be the number of positions $j \in (i, +\infty)$ for which $w(j) \leq i$, so that the assertion to be proved is that $a_i(w) = b_i(w) \in \NN$ for every $w$.  This claim is obviously true for the identity.  Suppose the claim is true for an affine permutation $v$, and let us consider whether it is true for $v \cdot s$ where $s$ is a simple transposition and $\lS(v\cdot s) > \lS(v)$.  

If $s$ is \emph{not} the (unique if it exists) simple transposition that transposes $i$ with the next-largest unfrozen value then $s$ does not transpose any entry whose position is in $(-\infty, i]$ with an entry whose position is in $(i, +\infty)$ and so $\{ v \cdot s (j) \colon j \in (-\infty, i]\} = \{ v (j) \colon j \in (-\infty, i]\}$ and $\{ v \cdot s (j) \colon j \in (i, +\infty)\} = \{ v (j) \colon j \in (i, +\infty)\}$.  Thus in this case $a_i(v\cdot s) = a_i(v) = b_i(v) = b_i(v \cdot s)$, by the inductive hypothesis for $v$.  If, on the other hand, $s$ is the simple transposition that transposes position $i$ with the next-largest unfrozen value $i'$ (either $i' = i + 1$ or, if $i + 1$ is the position of a mirror, then $i' = i + 2$), we have three cases:
  \begin{enumerate}[label=(\alph*)]
        \item if $v(i)$ and $v(i')$ are both larger than $i+1/2$ or both smaller than $i+1/2$, then $a_i(v \cdot s) = a_i(v) = b_i(v) = b_i(v \cdot s)$;
        \item if $v(i) < i+1/2 < v(i+1)$,
        then $a_i(v \cdot s) = 1 + a_i(v) = 1 + b_i(v) = b_i(v \cdot s)$; and
        \item $v(i) > i+1/2 > v(i+1)$ cannot occur because it would imply $\lS(v\cdot s) < \lS(v)$.
  \end{enumerate}
This proves that $a_i(w) = b_i(w)$ for all $w$, by induction.
\end{proof}

We now prove that if an element satisfies property~\ref{conditions:pattern} of Theorem~\ref{main theorem}, then it also satisfies property~\ref{conditions:length} of the theorem.

\begin{theorem}
\label{thm:321-free implies equality}
    Let $W$ be an unbranched George group and $w \in W$.  If $w$ globally avoids the pattern $321$, then $\dis(w)/2 = \lS(w)$.  
\end{theorem}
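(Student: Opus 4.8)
The plan is to induct on $\lS(w)$, removing one right descent at a time. If $w=e$ there is nothing to prove, so assume $w\neq e$ and choose any right descent $s$ of $w$. The structural fact to exploit is that every simple transposition of an unbranched George group swaps two \emph{consecutive} unfrozen integers $i<i'$ (with $i'=i+1$, or $i'=i+2$ and a single frozen mirror between them), and that $s$ being a right descent of $w$ says exactly that $w(i)>w(i')$. I would establish two things. First, a local lemma: \emph{if $w$ globally avoids $321$ and $w(i)>w(i')$, then in fact $w(i')\le i<w(i)$.} Second, that $ws$ again globally avoids $321$ --- for which, as explained below, only the descent condition $w(i)>w(i')$ is needed. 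Granting both, a direct computation of $\dis$ in each case (the adjacent $s_k=\langle(k\ k+1)\rangle$, and the type-specific transpositions $s_0=\langle(\ol1\ 1)\rangle$, $s_n=\langle(n\ n+2)\rangle$, and $s_0=\langle(0\ 1)\rangle$ in the respective groups, the last of which moves two window entries at once) uses the lemma to show $\dis(ws)=\dis(w)-2$; then the inductive hypothesis applied to $ws$ gives $\dis(ws)/2=\lS(ws)=\lS(w)-1$, whence $\dis(w)/2=\lS(w)$. In the language of Proposition~\ref{prop:crossing}'s proof, the lemma says that stripping $s$ lands us in ``case~(b)'', which is exactly where the bound of \cite{LT} is attained at this step.

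For the local lemma, suppose toward a contradiction that $w(i')>i$. Then both $w(i)$ and $w(i')$ exceed $i$, so neither of the positions $i,i'$ carries an unfrozen value $\le i$. In the finite-type groups $S_n$ and $S^B_n$ the unfrozen values $\le i$ are equinumerous with the unfrozen positions $\le i$, and since position $i$ holds a value $>i$, some unfrozen value $v\le i$ must occupy an unfrozen position $k>i$; as position $i'$ holds a value $>i\ge v$, in fact $k>i'$, and then $w(i)>w(i')>i\ge v=w(k)$ is a global $321$ at positions $i<i'<k$ --- contradiction. The inequality $w(i)>i$ follows symmetrically, using the unfrozen values $>i$ (or $w^{-1}$). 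This proves the lemma in types $S_n$ and $S^B_n$.

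For preservation of global $321$-avoidance, suppose $ws$ globally contains $321$ at positions $p_1<p_2<p_3$. Because $s$ is a right descent, $ws$ is increasing on the pair $\{i,i'\}$ and likewise on every image of that pair under the symmetry group of $W$; hence $\{p_1,p_2,p_3\}$ meets each such swapped pair in at most one position. Replace each moved $p_\ell$ by its swap-partner. Since swap-partners are consecutive unfrozen integers, no other (necessarily unfrozen) position of the triple lies between them, so the resulting positions $q_1<q_2<q_3$ occur in the same relative order, while $w(q_\ell)=ws(p_\ell)$ for every $\ell$. Then $w(q_1)>w(q_2)>w(q_3)$ is a global $321$ for $w$, contradicting the hypothesis. (One can instead phrase this through $\op{Inv}(ws)\subseteq\op{Inv}(w)$, which holds because $s$ is a right descent.)

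The step I expect to be the main obstacle is the local lemma in the \emph{affine} groups $\affS_n$ and $\affS^C_n$, where the counting above breaks down: there are infinitely many unfrozen values and unfrozen positions $\le i$, so equinumerosity yields nothing. I would repair this by running the pigeonhole inside a bounded fundamental domain and then invoking the periodicity $w(j+n)=w(j)+n$ (respectively the infinite dihedral symmetry) to produce the witnessing position $k$. Once the lemma holds in every type, the displacement computation and the $321$-preservation argument above apply uniformly across all unbranched George groups, including the degenerate cases $\affS_2$ and $\affS^C_1$ (where, as in Remark~\ref{rem:weird ones}, every element globally avoids $321$ to begin with).
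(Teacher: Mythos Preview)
Your proposal is correct and follows essentially the same approach as the paper's proof: induct on $\lS(w)$, strip a right descent $s$, show $ws$ still globally avoids $321$, and verify $\dis(w)-\dis(ws)=2$ via the ``local lemma'' that the descent straddles the break, i.e., $w(i')\le i<w(i)$.  The one substantive difference is in how the local lemma is established: you give a direct pigeonhole for the finite types and flag the affine types as the obstacle, whereas the paper simply invokes Proposition~\ref{prop:crossing} (the equality $a_i(w)=b_i(w)$) uniformly---if $w(i')>i$ then $a_i(ws)>0$, so $b_i(ws)>0$ produces the witnessing $k>i'$ with $ws(k)\le i$, and $w(i)\,w(i')\,w(k)$ is a global $321$.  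Your proposed ``pigeonhole inside a fundamental domain plus periodicity'' fix is precisely the content of that proposition, so you have already identified the missing ingredient; you just did not recognize that Proposition~\ref{prop:crossing} packages it for you in all four types at once.
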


\begin{proof}
We proceed by induction on $\lS(w)$.  Certainly the statement is true for the identity permutation.  So choose an element $w \in W$ that globally avoids $321$ with $\lS(w) > 0$, and assume that the result is true of all globally $321$-avoiding (affine) (signed) permutations in $W$ of smaller Coxeter length.  Choose a right descent $s$ of $w$, and let $v = w \cdot s$.  Then $v$ also globally avoids $321$ (because $s$ is a descent, so multiplying by it removes a left inversion and creates no new inversions), and consequently by assumption $\lS(w) = \lS(v) + 1 = \dis(v)/2 + 1$.  Then it suffices to show that $\dis(v)/2 + 1 = \dis(w)/2$.  We consider two cases, depending on whether the integers transposed by $s$ belong to the same symmetry class or not.  

Write $s = \langle (i \ i')\rangle$ where $i < i'$ are consecutive non-frozen integers.  If $i$ and $i'$ belong to different symmetry classes then $i' = i + 1$ and
    \begin{equation}\tag{$*$}
        \label{eq:dis diff}
        \dis(w) - \dis(v) = |v(i) - (i + 1)| + |v(i + 1) - i| - |v(i + 1) - (i + 1)| - |v(i) - 1|.
    \end{equation}
Let us consider now the sizes of the values $v(i)$ and $v(i + 1)$.  Since $s = \langle (i \ i + 1)\rangle$ is a descent of $w = v \cdot s$, it is an ascent of $v$, i.e., $v(i) < v(i + 1)$.  In the taxonomy that appears in the proof of Proposition~\ref{prop:crossing}, we claim that case (a) is not possible: if it were the case that $v(i) > i$ then (by definition, since $v(i + 1) > v(i) > i$) $a_i(v \cdot s) = a_i(v) > 0$ and so (by Proposition~\ref{prop:crossing}) $b_i(v \cdot s) = b_i(v) > 0$.  Then there would be some unfrozen integer $k > i$ for which $v(k) \leq i$.  Since $v(k) \leq i$ and $v(i + 1) > i$, $k \neq i + 1$, and therefore $k > i + 1$.  Therefore the sequence $w(i) w(i + 1)w(k) = v(i + 1) v(i) v(k)$ is an instance of $321$ in $w$, a contradiction.  A similar argument establishes that $v(i + 1) > i$ (assuming the negation, we find $k < i$ with $v(k) > i$).  Plugging these inequalities into \eqref{eq:dis diff}, we have that
\begin{align*}
\frac{1}{2}\dis(w) & = \frac{1}{2}\big(\dis(v) + (i + 1 - v(i)) + (v(i + 1) - i) - (v(i + 1) - (i + 1)) - (i - v(i))\big) \\
& = \frac{1}{2} \dis(v) + 1 = \lS(w),
\end{align*}
as claimed.

In the alternative case that $s = \langle (i \ i + 2 ) \rangle$ transposes two positions in the same symmetry class across a mirror at position $i + 1$, in place of \eqref{eq:dis diff} we have
\[
\dis(w) - \dis(v) = |v(i + 2) - i| - |v(i) - i|.
\]
Because $i + 1$ is a mirror, we have $v(i + 2) = 2(i + 1) - v(i)$.  Moreover, because $s$ is an ascent of $v$, we have $v(i) < v(i + 2)$, and  therefore $v(i) \leq i < v(i + 2)$.  Thus in this case
\begin{align*}
    \frac{1}{2} \dis(w) & = \frac{1}{2}\big(\dis(v) + ((2(i + 1) - v(i)) - i) - (i - v(i))\big) \\
    & = \frac{1}{2}\dis(v) + 1 = \lS(w),
\end{align*}
as claimed.
\end{proof}

For the remainder of the proof of Theorem~\ref{main theorem}, we split into two cases, depending on whether the group under consideration is degenerate or not.

\subsection{The nondegenerate case}

For the next three results, we restrict ourselves to the case that $W$ is a nondegenerate unbranched George group.  The first of these is that property~\ref{conditions:reduced word} of Theorem~\ref{main theorem} implies property~\ref{conditions:pattern}.  We state it here in the contrapositive form.

\begin{theorem}\label{thm: reduced word implies 321-free}
    Let $W$ be a nondegenerate unbranched George group of window size $n$. If 
    an element $w \in W$ globally contains the pattern $321$, then some reduced word for $w$ contains a consecutive triple $x (x \pm 1) x$ for $x \in [n - 1]$.
\end{theorem}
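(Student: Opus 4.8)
The plan is to argue by contrapositive, exactly as the statement is phrased: assuming $w$ globally contains a $321$-pattern, I want to produce a reduced word with a consecutive factor $x(x\pm1)x$. The natural route is through Proposition~\ref{prop:BJS and Green} and Proposition~\ref{prop:Stembridge}, which already give this equivalence for $S_n$, $\affS_n$, and $S^B_n$; so the only genuinely new work is in $\affS^C_n$ for $n\ge 2$, and in checking that the small nondegenerate cases of the other families are covered (they are, since those propositions hold for all $n$ in their range). So the real content is: \emph{in $\affS^C_n$ ($n \ge 2$), global containment of $321$ forces a reduced word with a factor $x(x\pm1)x$.} I would isolate this as the main case and handle it directly.

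For $\affS^C_n$, I would first reduce to a finite-window statement using the structure recalled in Section~\ref{sec:groups}: an element $w \in \affS^C_n$ is determined by its window $w(1),\dots,w(n)$ together with the two mirror symmetries, and a global $321$-pattern, after applying the symmetries $w(-i)=-w(i)$ and $w(2(n+1)-i)=2(n+1)-w(i)$, can be translated into a $321$-pattern among positions lying in a bounded fundamental domain. The cleanest approach is probably to use the known embedding $\affS^C_n \hookrightarrow \affS_{2(n+1)}$ (the standard realization of type $\widetilde C$ inside affine type $A$, under which a type-$\widetilde C$ reduced word pulls back to an affine-symmetric-group reduced word in a controlled way): a global $321$ in $w \in \affS^C_n$ gives a global $321$ in its image $\hat w \in \affS_{2(n+1)}$, by Proposition~\ref{prop:BJS and Green} some reduced word for $\hat w$ has a factor $j(j\pm1)j$, and then I would need to pull this back through the fold to get a factor $x(x\pm1)x$ in a reduced word for $w$ itself. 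Alternatively, and perhaps more self-containedly, I would mimic Stembridge's argument: given the global $321$-pattern, locate a specific right descent or a specific ``braid-able'' triple of adjacent positions in the window (or straddling a mirror), and show that $w$ has a reduced word ending in $x(x\pm1)x$ by exhibiting an explicit factorization $w = u \cdot s_x s_{x\pm1} s_x$ with the lengths adding.

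The key steps, in order: (1) dispatch $S_n$, $\affS_n$, $S_n^B$ by citing Propositions~\ref{prop:BJS and Green} and~\ref{prop:Stembridge}; (2) for $w\in\affS^C_n$ globally containing $321$, use Proposition~\ref{prop:crossing} and the mirror symmetries to find a global $321$-pattern $w(h)>w(i)>w(j)$ with $h<i<j$ lying in a single window-sized block, possibly adjacent to a mirror; (3) from such a pattern, extract a length-additive factorization of $w$ realizing a braid triple $x(x\pm1)x$ at the end of a reduced word — handling separately the case where the relevant adjacent pair of positions crosses a mirror (giving a factor involving $s_0$ or $s_n$, i.e. $x=1$ or $x=n$ with the neighbor on the correct side) versus an ordinary adjacent transposition; (4) note the degenerate cases $\affS_2$, $\affS^C_1$ are excluded by hypothesis (and are handled separately in Section~\ref{sec:weird ones}). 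I expect step~(3) — converting ``there is a $321$-pattern somewhere'' into ``there is a reduced word with a braid factor in a specified location'' — to be the main obstacle, because it requires producing an explicit reduced word rather than merely counting inversions; the cases near the two mirrors of $\affS^C_n$ will need the most care, since there the ``next-largest unfrozen value'' of a position can be $i+2$ rather than $i+1$, so the braid move uses the sign-change generator and one must verify the lengths still add correctly.
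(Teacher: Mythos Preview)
Your dispatch of $S_n$, $\affS_n$, and $S^B_n$ via Propositions~\ref{prop:BJS and Green} and~\ref{prop:Stembridge} is exactly what the paper does, so that part is fine.  The difficulty is entirely in $\affS^C_n$, and here neither of your two sketched approaches matches the paper's, and each has a gap.

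Your embedding route is problematic at the ``pull back'' step.  Under the fold $\affS^C_n \hookrightarrow \affS_{2(n+1)}$, a simple reflection of $\affS^C_n$ maps to a \emph{product} of (commuting) type-$A$ simple reflections, so reduced words do not correspond letter-for-letter.  Knowing that the image $\hat{w}$ has some type-$A$ reduced word with a factor $j(j\pm1)j$ does not by itself give you a type-$\widetilde C$ reduced word for $w$ with the desired factor; in particular, the offending $j$ could sit at a folded position, and there is no general mechanism for transporting braid moves back across a fold.  You would need a separate argument (essentially a theorem of its own) to make this work.

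Your direct route is closer in spirit, but your step~(2) --- reducing to a $321$-pattern ``in a single window-sized block'' --- is not the right normalization, and step~(3) as stated is not enough.  The paper's device is different: take an instance $(i,j,k)$ of $321$ of \emph{minimal width} $k-i$, and induct simultaneously on the width and on the number of symmetry classes among the unfrozen values in $\{i+1,\dots,k-1\}$.  The base cases (width $2$, $3$, $4$) are handled directly, using a diagram automorphism of $\affS^C_n$ to place a chosen frozen value at $0$; these are exactly the situations where one can write down a reduced word ending in $xyx$ by hand.  In the inductive step, one does \emph{not} try to exhibit a braid factor directly.  Instead one identifies $i'$ (the smallest unfrozen value above $i$) and $k'$ (the largest below $k$), and argues by cases on whether $i',k'$ lie in the same symmetry class (and if so, whether they are mirror images or translates).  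In each case, either the minimality of $k-i$ is contradicted, or one finds a simple reflection $s$ such that $w\cdot s$ is shorter and contains a strictly narrower $321$; induction applied to $w\cdot s$ yields the braid factor, and since every reduced word for $w\cdot s$ extends to one for $w$, we are done.  The minimal-width hypothesis is used repeatedly to control the values $w(i')$, $w(k')$ and to rule out bad sub-cases; without it, your step~(3) would have to handle patterns that are arbitrarily spread out and straddle multiple mirrors, which is exactly the difficulty you flagged but did not resolve.
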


\begin{proof}
If $W$ is one of $S_n$, $S^B_n$, or $\affS_n$, the result follows immediately from Proposition~\ref{prop:BJS and Green} or~\ref{prop:Stembridge} (i.e., from the work of Billey--Jockusch--Stanley, Green, and Stembridge).  Thus it suffices to consider the case that $W = \affS^C_n$ is the group of affine signed permutations, $n \geq 2$.

Suppose that $w \in \affS^C_n$ globally contains the pattern $321$, and choose an instance $i < j < k$, $w(i) > w(j) > w(k)$ of $321$ in $w$ so that its \emph{width} $k - i$ is minimal among all global copies of $321$ in $w$.  It will be convenient to take advantage of the following symmetry: if we specify a frozen value $f$ in the interval $[i, k]$, we claim that without loss of generality we may take $f = 0$.  There are two cases: if $f = 2m(n + 1)$ for some $m \in \ZZ$, then by the translation symmetry of $\affS^C_n$ we can replace the triple $(i, j, k)$ with the triple $(i - 2m(n + 1), j - 2m(n + 1), k - 2m(n + 1))$ and replace $f$ with $0$.  If instead $f = (2m + 1)(n + 1)$ for some $m \in \ZZ$, we may apply the group automorphism $\alpha: \affS^C_n \to \affS^C_n$ that is defined for $u \in \affS^C_n$ by $\alpha(u)(x) = n + 1 - u(x)$ for all $x \in \ZZ$.  This automorphism is induced by the unique nontrivial symmetry $s_\ell \leftrightarrow s_{n - \ell}$ for $\ell = 0, 1, \ldots, n$ of the Coxeter--Dynkin diagram of $\affS^C_n$, so that $\alpha(w)$ has a reduced word with one of the forbidden patterns if and only if $w$ does; moreover, it is easy to check that $(n + 1 - k, n + 1 - j, n + 1 - i)$ is an instance of the global pattern $321$ in $\alpha(w)$ and that $-2m(n + 1)$ is a frozen value in the interval spanned by this instance in $\alpha(w)$, so then by the prior case we can choose another instance for which the selected frozen value is $0$.

We proceed by simultaneous induction on the width $k - i$ and the number of symmetry classes of unfrozen elements in $\{i + 1, \ldots, k - 1\}$.  Since this latter set contains at least one unfrozen element (namely, $j$), the base cases are when there is exactly one symmetry class of unfrozen elements present in $\{i + 1, \ldots, k - 1\}$.  There are several ways this can happen: 
     \begin{itemize}
         \item if $i = j - 1 = k - 2$, then $i$, $j$, and $k$ belong to distinct symmetry classes (they cannot be translations because $w(i)w(j)w(k)$ is decreasing, and they cannot be on opposite sides of a mirror because there would have to be a frozen value separating them).  In this case, let $s_x$ and $s_y$ be the simple transpositions that respectively swap $i$ with $j = i + 1$ and $j$ with $k = j + 1$.  Then $x, y \in [n - 1]$ because $\{i, i + 1, i + 2\}$ are all unfrozen, $y = x \pm 1$, and $w$ has a reduced word of the form $\cdots \ x \ y \ x$, as desired.
         \item if $k = i + 3$ then one of $i + 1$, $i + 2$ is frozen. Using the reduction of the previous paragraph to take this frozen value to be $0$ and checking both cases, 
         one has necessarily that $w(-2) > w(-1) > 0 > w(1) > w(2)$ and so that $w$ has a reduced word of the form $\cdots 1 \ 0 \ 1$.
         \item if $k = i + 4$ then $i + 2 = 0$. The minimality of $k-i$ 
         means that $w(-2) > w(1) > 0 > w(-1) > w(2)$, in which case $w$ has a reduced word of the form $\cdots 1 \ 0 \ 1$.
     \end{itemize}
     
     Now suppose $w$ globally contains $321$ but is \emph{not} in any of those base cases, so that at least two symmetry classes of unfrozen values are represented by $\{i + 1, \ldots, k - 1\}$ when $k - i$ is chosen minimal among all possible copies of $321$.  Let $i'$ be the minimum unfrozen value larger than $i$ and let $k'$ be the maximum unfrozen value smaller than $k$.  
     
     Suppose first that $i'$ and $k'$ belong to different symmetry classes.  Thus (at least) one of them is in a different symmetry class from $j$; if it is $i'$, then let $s$ be the simple transposition that swaps $i$ and $i'$, otherwise let $s$ be the simple transposition that swaps $k'$ and $k$.  In either case, $w \cdot s$ has a narrower global $321$-pattern than $w$ had had, and that pattern spans the same or fewer symmetry classes of unfrozen values than $w(i)w(j)w(k)$ had done in $w$.  
     So, by induction, $w\cdot s$ has a reduced word that contains a consecutive triple $x \ (x \pm 1) \ x$ for some $x \in [n-1]$.  Furthermore, $w(i') < w(i)$ (or else $(i', j, k)$ would be a narrower copy of $321$ than $(i, j, k)$ in $w$), so $w \cdot s$ is less than $w$ in weak order.  Thus every reduced word for $w\cdot s$ is a consecutive substring of a reduced word for $w$, and so $w$ has a reduced word that contains a consecutive triple $x (x \pm 1) x$ for some $x \in [n-1]$.
     
     Now suppose, instead, that $i'$ and $k'$ belong to the same symmetry class.  We consider two cases, depending on whether they are mirror images of each other or translations.  
     
     If $i'$ and $k'$ are mirror images, then by the discussion at the beginning of the proof, we have $i' = -k'$ and $i = -k$.  If $j$ is not equal to $i'$ or $k'$, then let $s$ be the transposition swapping $i$ with $i'$ (and also $k$ with $k'$). Then $w \cdot s$ has a narrower global $321$-pattern than $w$ had had, and we can proceed by induction.  If $j \in \{i',k'\}$, then assume, without loss of generality, that $j = i'$.  The minimality of $k-i$ means that $w(i') < 0 < w(k')$ (otherwise we would have considered the narrower global $321$-pattern $w(i)w(i')w(k')$).  Choose $j'$ any unfrozen element of $\{i' + 1, \ldots, k' - 1\}$. Such a $j'$ exists because we are not in the base case of the induction.  If $w(j') < w(i')$ then $w(i)w(i')w(j')$ would be a narrower global $321$-pattern in $w$, and if $w(j') > w(k')$ then $w(j')w(k')w(k)$ would a narrower global $321$-pattern in $w$. Thus, by the minimality of $k-i$, we must have $w(i') < w(j') < w(k')$.  But now we can replace $j$ with $j'$, and consider the global $321$-pattern $w(i)w(j')w(k)$, with $j' \not\in \{i', k'\}$, and follow the previous construction.

    Finally, consider the possibility that $i'$ and $k'$ are translations of each other.  If $j \not\in \{i', k'\}$, then proceed as in the case where $i'$, $k'$ are in different symmetry classes. On the other hand, if, without loss of generality, $j = i'$ then $i' < k - (2n + 2)$ and $w(k - (2n + 2)) < w(k)$ so $w(i)w(i')w(k - (2n + 2))$ is a narrower global $321$-pattern in $w$. 
    This would contradict the minimality of $k-i$, so it cannot exist.
\end{proof}

For the next step in our proof of Theorem~\ref{main theorem}, we prove
that property~\ref{conditions:length} implies property~\ref{conditions:reduced word}; we again state the result in contrapositive form.

\begin{theorem}\label{thm: equality implies reduced word}
    Let $W$ be a nondegenerate unbranched George group of window size $n$.  If an element $w \in W$ has a reduced word that contains a consecutive triple $x (x \pm 1) x$ for $x \in [n - 1]$, then $\dis(w)/2 < \lS(w)$.
\end{theorem}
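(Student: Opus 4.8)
The plan is to reduce to the case where the triple $x(x\pm1)x$ occurs at the \emph{end} of a reduced word for $w$, and then to show that removing those three letters drops $\dis$ by at most $4$, rather than the $6$ that three covering relations in weak order would a priori permit. Two facts are used throughout, both direct calculations in the spirit of the proof of Theorem~\ref{thm:321-free implies equality} (and essentially the content of the $\dis/2 \le \ell_S$ bound of \cite{LT}): for any $v \in W$ and any simple transposition $s$ we have $\dis(vs) \le \dis(v) + 2$; and if moreover $\ell_S(vs) < \ell_S(v)$, then $\dis(v) - \dis(vs) \in \{0,2\}$. Now suppose $i_1 \cdots i_\ell$ is a reduced word for $w$ with $i_m i_{m+1} i_{m+2} = x\,y\,x$, where $y = x \pm 1$. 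Set $u = s_{i_1}\cdots s_{i_{m-1}}$, $w' = u\,s_x s_y s_x$, and $z = s_{i_{m+3}}\cdots s_{i_\ell}$; then $w = w'z$ with $\ell_S(w') = \ell_S(u) + 3$ and $\ell_S(w) = \ell_S(w') + \ell_S(z)$. Rebuilding $w$ from $w'$ by the letters of $z$ and applying the first fact $\ell_S(z)$ times gives $\dis(w) \le \dis(w') + 2\ell_S(z)$, so it suffices to prove $\dis(w') \le 2\ell_S(w') - 2$.

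To prove that, peel $s_x$, then $s_y$, then $s_x$ off the right end of $w'$, arriving at $u$; each is a covering step down, so by the second fact $\dis(w') - \dis(u)$ is a sum of three terms in $\{0,2\}$. The key claim is that this sum is at most $4$. Granting it, $\dis(w') \le \dis(u) + 4 \le 2\ell_S(u) + 4 = 2(\ell_S(w') - 3) + 4 = 2\ell_S(w') - 2$, as needed, and unwinding yields $\dis(w) \le 2\ell_S(w) - 2 < 2\ell_S(w)$.

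The heart of the matter is thus the claim $\dis(w') - \dis(u) \le 4$. The element $t := s_x s_y s_x$ is a reflection, and in a \emph{nondegenerate} unbranched George group a short case analysis identifies it: if $s_x, s_y$ are both ordinary adjacent transpositions then $t = \langle(x-1\ x+1)\rangle$ or $\langle(x\ x+2)\rangle$ (in affine type A, near the window's seam, possibly $\langle(n-1\ n+1)\rangle$ or $\langle(n\ n+2)\rangle$); if $y = 0$ then $t = \langle(\ol2\ 2)\rangle$ (or $\langle(n\ n+2)\rangle$ in $\affS_n$); and if $y = n$ in $\affS^C_n$ then $t = \langle(n-1\ n+3)\rangle$, the reflection across the mirror at $n+1$. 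In every case $w' = ut$ and $u$ agree at all but at most two of the window positions $1, \ldots, n$, and every entry of $u$ that $t$ relocates has its target position shifted by at most $2$; grouping the absolute-value terms of $\dis(w') - \dis(u)$ by value and applying the triangle inequality to each then gives $\dis(w') - \dis(u) \le 4$. Each such verification is a three-line computation, but there are several cases — the "wrap-around" triples $1\,0\,1$ and $(n-1)\,n\,(n-1)$, where window positions are moved by a mirror reflection or by the window period, are the fussiest — and this is where the bulk of the work lies.

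I expect this last claim to be the main obstacle, and it is also precisely where nondegeneracy is essential: in $\affS_2$ the analogue of the reflection $s_1 s_0 s_1$ shifts window entries by $3$ rather than $2$, so the bound becomes $\dis(w') - \dis(u) \le 6$ with no gain, and indeed $\dis(s_1 s_0 s_1) = 6 = 2\ell_S(s_1 s_0 s_1)$, exhibiting the failure of the conclusion in the degenerate case.
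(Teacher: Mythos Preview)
Your approach is correct and is morally the same as the paper's, but it is considerably more roundabout.  The paper argues in one stroke: it computes $\dis(s_x s_{x\pm 1} s_x) = 4$ in each of the three flavors (two adjacent transpositions; $s_1 s_0 s_1 = \langle(\overline{2}\ 2)\rangle$; $s_{n-1} s_n s_{n-1} = \langle(n-1\ n+3)\rangle$), and then applies subadditivity of $\dis$ to the full reduced word, viewed as a product of $\lS(w)-3$ simple reflections together with the single block $s_x s_{x\pm1} s_x$, obtaining $\tfrac12\dis(w) \le (\lS(w)-3) + 2 < \lS(w)$.  Your splitting into prefix/triple/suffix and the inequality $\dis(ut)-\dis(u)\le 4$ are just this subadditivity statement unpacked by hand; you gain nothing from the extra scaffolding, and the ``peeling off three descents'' framing is never actually used once you pass to the triangle-inequality computation.

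One genuine imprecision to flag: your claim that in every case ``every entry of $u$ that $t$ relocates has its target position shifted by at most $2$'' is false precisely in the mirror cases you call fussiest.  For $t = s_1 s_0 s_1 = \langle(\overline{2}\ 2)\rangle$ in $S^B_n$ or $\affS^C_n$, only the single window position $2$ is moved, and it is moved to $-2$, a shift of $4$; likewise $t = s_{n-1} s_n s_{n-1} = \langle(n-1\ n+3)\rangle$ moves only position $n-1$, by $4$.  The correct uniform statement is simply $\dis(t) = 4$ (either two window positions each displaced by $2$, or one window position displaced by $4$), and then $\dis(ut) \le \dis(u) + \dis(t)$ follows from subadditivity, using the group symmetry to identify $\sum_{i=1}^n |u(t(i)) - t(i)|$ with $\dis(u)$ even when $t$ moves window positions outside the window.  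With that correction your argument goes through, but at that point it has become exactly the paper's proof.
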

\begin{proof}
If $s_i$ and $s_{i + 1}$ are both adjacent transpositions, we have
\[
s_i s_{i + 1} s_i = s_{i + 1}s_i s_{i + 1} = \langle (i \ i + 2)\rangle
\]
where $i$ and $i + 2$ lie in different symmetry classes, so $\lS(s_i s_{i + 1} s_i) = 3 > 2 =  \dis(s_i s_{i + 1} s_i)/2$.

When $s_1$ is an adjacent transposition and $s_0$ is a sign-change, we have
\[
s_1 s_0 s_1 = \langle (- 2 \ 2) \rangle
\]
with $-2$ and $2$ in the same symmetry class, so $\lS(s_1s_0s_1) = 3 > 2 =  \dis(s_1 s_0s_1)/2$.  Similarly, in $\affS^C_n$, we have that $s_{n - 1} = \langle (n - 1 \ n) \rangle$ is an adjacent transposition, $s_n = \langle (n \ n + 2)\rangle$ is a transposition across the frozen value $n + 1$, and
\[
s_{n - 1} s_n s_{n - 1} = \langle (n - 1 \ n + 3) \rangle
\]
with $n-1$ and $n + 3$ in the same symmetry class. Thus
$$\lS(s_{n - 1}s_ns_{n - 1}) = 3 > 2 =  \dis(s_{n - 1} s_ns_{n - 1})/2.$$

By hypothesis, there is a reduced word for $w$ of the form $\cdots s_i s_{i \pm 1} s_i \cdots$.  The preceding paragraph shows that in all cases, $\dis(s_i s_{i \pm 1} s_i)/2 = 2$.  Considering this reduced word as a factorization of $w$ into a product of $\lS(w) - 3$ simple transpositions and the additional factor  $s_i s_{i \pm 1} s_i$, we have by subadditivity of disarray that
\[
\frac{1}{2} \dis(w) \leq (\lS(w) - 3) \cdot 1 + 2 < \lS(w), 
\]
as claimed.
\end{proof}

Combining Theorems~\ref{thm:321-free implies equality},~\ref{thm: reduced word implies 321-free}, and~\ref{thm: equality implies reduced word} implies that properties~\ref{conditions:length}, \ref{conditions:reduced word}, and~\ref{conditions:pattern} of Theorem~\ref{main theorem} are equivalent in the case of nondegenerate unbranched George groups. What remains to be proved of Theorem~\ref{main theorem} in this case is to show that properties~\ref{conditions:reduced word} and~\ref{conditions:reduced word new} are equivalent, as well.

\begin{theorem}\label{thm:alt}
    If $w$ is an element of a nondegenerate unbranched George group of window size $n$, then no reduced word for $w$ contains consecutive letters $i(i \pm 1)i$ for $i \in [n - 1]$ if and only if, in every reduced word for $w$, for all $i \in [n - 1]$, every two copies of $i$ are separated by both a copy of $i - 1$ and of $i + 1$.
\end{theorem}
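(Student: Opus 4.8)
The plan is to prove the biconditional by establishing each implication in contrapositive form; one direction is essentially immediate and the other carries all the weight.

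\textbf{The easy direction.} To see that (B) implies (B') I would show the contrapositive. Suppose a reduced word $\mathbf w$ for $w$ contains a consecutive triple $s_x s_{x\pm1}s_x$ with $x\in[n-1]$; then $\mathbf w$ has two copies of the letter $x$ separated by the single letter $x\pm1$. Nondegeneracy enters here: it guarantees $s_{x-1}\neq s_{x+1}$ (for $\affS_n$ with $n\ge3$ because $x-1\not\equiv x+1\pmod n$, and for the other groups because distinct generators carry distinct labels), so a single letter cannot be a copy of both $s_{x-1}$ and $s_{x+1}$. Hence those two copies of $s_x$ are not separated by a copy of both neighbours, so $\mathbf w$ witnesses the failure of (B) at the index $x$.

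\textbf{The hard direction.} For (B') implies (B) I would again argue the contrapositive: from a reduced word $\mathbf w$, an index $i\in[n-1]$, and two copies of $s_i$ in $\mathbf w$ not separated by a copy of $s_{i-1}$ (or not by a copy of $s_{i+1}$), I want to produce a reduced word for $w$ containing a forbidden triple. First pass to the closest such pair, so the factor $\mathbf v$ of $\mathbf w$ strictly between the two copies contains no $s_i$; say without loss of generality that $\mathbf v$ contains no $s_{i+1}$ (the other case is the mirror image, ``descend'' becoming ``ascend''). Being a subword of $\mathbf w$, $s_i\mathbf v s_i$ is reduced, so $\mathbf v$ must contain a Coxeter-graph neighbour of $s_i$; since $W$ is unbranched the only candidates are $s_{i-1}$ and $s_{i+1}$, and $s_{i+1}$ is absent, so $s_{i-1}$ exists and occurs in $\mathbf v$. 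Now recurse, descending through $i,i-1,i-2,\dots$: at a generic stage there are two consecutive copies of $s_m$ whose intermediate factor $\mathbf v_m$ avoids $s_m,s_{m+1},\dots,s_{i+1}$, hence (failing to commute with $s_m$) contains a copy of $s_{m-1}$. If it contains exactly one copy, write $\mathbf v_m=\mathbf a\,s_{m-1}\,\mathbf b$; the letters of $\mathbf a$ and $\mathbf b$ avoid both neighbours $s_{m-1},s_{m+1}$ of $s_m$, so they commute with $s_m$, and $s_m\mathbf v_m s_m$ is commutation-equivalent to $\mathbf a\,(s_m s_{m-1}s_m)\,\mathbf b$, exhibiting the triple inside a reduced word for $w$. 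Otherwise there are at least two copies of $s_{m-1}$ in $\mathbf v_m$; passing to two consecutive ones gives a strictly shorter intermediate factor avoiding $s_{m-1},s_m,\dots,s_{i+1}$, and the recursion continues at $m-1$. Since the intermediate factor strictly shrinks, the recursion terminates; it can only do so via an extraction as above, because whenever it would pass to a generator $s_{m-1}$ that is absent (or, only in $\affS_n$, already excluded), reducedness forbids two copies of it in the intermediate factor, forcing exactly one and triggering the extraction. The extracted triple is $s_m s_{m-1}s_m$ with $s_m$ adjacent to $s_{m-1}$; if $m\in[n-1]$ it already has the forbidden shape, and the sole remaining case, $m\equiv0$ in $\affS_n$, is an order-$3$ edge, so a single braid move turns it into $s_{m-1}s_m s_{m-1}$, which (as $s_0=s_n$) is $s_1 s_0 s_1$ or $s_{n-1}s_n s_{n-1}$ --- again forbidden. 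Thus (B') fails.

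\textbf{The main obstacle.} The easy direction aside, essentially all the content is the termination-and-extraction bookkeeping, and the genuinely delicate point is $\affS_n$, whose Coxeter graph is a cycle: the descent can wrap past index $0$, so one must check that the excluded indices always form a cyclically consecutive block that gains one element per step --- this is what forces termination --- and that, allowing also for the order-$4$ edges of $S^B_n$ and $\affS^C_n$ and the index-$0$ edges of $\affS_n$, the finally extracted triple is, after at most one braid move, literally of the shape $s_i s_{i\pm1}s_i$ with $i\in[n-1]$. The other steps (reducing to the closest pair, the commutation extraction, and checking that in $S_n$, $S^B_n$, and $\affS^C_n$ the descent cannot escape $[n-1]$) are routine.
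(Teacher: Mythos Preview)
Your proof is correct and follows essentially the same strategy as the paper's: both handle the hard direction by recursively passing to a neighboring index (the paper first commutes away all letters on one side of $i$ and then ascends, while you descend and commute only at the extraction step) until exactly one copy of the neighbor remains and the forbidden triple can be isolated by commutation moves. Your explicit handling of the cyclic wrap-around and the final braid move in $\affS_n$ is, if anything, more careful than the paper's somewhat terse treatment, but the underlying argument is the same.
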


\begin{proof}
The backwards direction of the result, that property~\ref{conditions:reduced word new} implies property~\ref{conditions:reduced word}, is immediate: the consecutive triple $i (i \pm 1)i$ contains two copies of $i$ separated by only one of $i - 1$, $i + 1$, and in any nondegenerate group the transpositions $s_{i - 1}$ and $s_{i + 1}$ are not equal for any $i$.

Conversely, suppose that $w$ fails to have property~\ref{conditions:reduced word new}. Suppose that $w$ has a reduced word with two copies of $i$ separated by $i+1$ and not by $i-1$, for some $i \in [n-1]$. Moreover, assume that there are no other copies of $i$ between the two identified ones (otherwise, choose a closer pair of $i$s). We can perform a sequence of commutation moves to relocate any letters less than $i-1$ so that they do not lie between the identified copies of $i$. Thus, the letters lying strictly between these two copies of $i$ are all greater than $i$. If there are two copies of $i+1$ among these letters, then those two copies must be separated by $i+2$, and we know that they have no $i$ between them. Thus those two copies of $i+1$ would also have demonstrated that $w$ fails to have property~\ref{conditions:reduced word new}, and we could instead have started with them. By repeating this argument, we can assume that there is exactly one copy of $i+1$ lying between our two copies of $i$. Now perform a sequence of commutation moves to position these three letters --- $i$, $i+1$, and $i$ --- consecutively in a reduced word for $w$, meaning that $w$ does not have property~\ref{conditions:reduced word}. 

The case in which two copies of $i$ do not have a copy of $i + 1$ between them can be handled identically to the previous argument, swapping the roles of $i - 1$ and $i + 1$, etc.
\end{proof}

\subsection{The 
infinite dihedral cases
}\label{sec:weird ones}

It remains to show the assertion of Remark~\ref{rem:weird ones}, namely, that in the degenerate cases $\affS_2$ and $\affS^C_1$, \emph{every} element satisfies conditions~\ref{conditions:length}, \ref{conditions:reduced word new}, and~\ref{conditions:pattern}.  
The groups $\affS_2$ and $\affS^C_1$ are infinite dihedral groups, and their Coxeter--Dynkin diagrams are shown in Figure~\ref{fig:affS2 Dynkin}.  We begin with condition~\ref{conditions:reduced word new}.

In $\affS^C_1$, $n = 1$, and so condition~\ref{conditions:reduced word new} (which asserts something about all $i \in [n - 1]$) is vacuously true for every element in this group.  Now consider any element $w$ in $\affS_2$.  Since $n = 2$, the assertion to be proved is that in every reduced word for $w$, every two copies of $1$ are separated by a copy of $0$ and by a copy of $2$.  Since $s_0 = s_2$ is the only simple transposition other than $s_1$ in $\affS_2$, this assertion becomes that every two copies of $1$ are separated from each other in every reduced word, and this follows immediately from the definition of reduced words.

We now consider conditions~\ref{conditions:length} and~\ref{conditions:pattern}.  The group $\affS_2$ permutes $\ZZ$ with no frozen values, while $\affS^C_1$ permutes the odd integers with every even integer frozen.  If $f: \ZZ \to (1 + 2\ZZ)$ is defined by $f(z) = 2z + 1$ then $w \mapsto f^{-1} \circ w \circ f$ is an isomorphism from $\affS^C_1$ to $\affS_2$, and one can check that furthermore it preserves $\lS$, $\dis$, and global $321$-patterns.  Thus it suffices to consider $\affS_2$. 

\begin{figure}[htbp]
\begin{tikzpicture}[v/.style={circle,draw}, node distance = 4em]
\node[v] (0)  {};
\node[v] (1) [right of = 0] {};
\draw[thick] (0) node[below=0.1em] {$\begin{array}{c} s_0 = s_2 \\ \rotatebox{90}{=} \\ \langle (0 \ 1)\rangle \end{array}$} 
[double] -- node[above]{$\infty$} (1) node[below=0.1em] {$\begin{array}{c} s_1 \\ \rotatebox{90}{=} \\ \langle (1 \ 2)\rangle \end{array}$}; 
\draw (0) node[left=2em] {\framebox{$\affS_2$}};
\end{tikzpicture}
\hspace{4em}
\begin{tikzpicture}[v/.style={circle,draw}, node distance = 4em]
\node[v] (0)  {};
\node[v] (1) [right of = 0] {};
\draw[thick] (0) node[below=0.1em] {$\begin{array}{c} s_0 \\ \rotatebox{90}{=} \\ \langle (\overline{1} \ 1)\rangle \end{array}$} 
[double] -- node[above]{$\infty$} (1) node[below=0.1em] {$\begin{array}{c} s_1 \\ \rotatebox{90}{=} \\ \langle (1 \ 3)\rangle \end{array}$}; 
\draw (0) node[left=2em] {\framebox{$\affS^C_1$}};
\end{tikzpicture}

    \caption{The Coxeter--Dynkin diagrams of $\affS_2$ and $\affS^C_1$}
    \label{fig:affS2 Dynkin}
\end{figure}

For any $w \in \affS_2$, we have $\cdots < w(-3) < w(-1) < w(1) < w(3) < \cdots$ and $\cdots < w(-2) < w(0) < w(2) < \cdots$, so the infinite word $\cdots w(-2)w(-1)w(0)w(1)w(2)\cdots$ is a union of two increasing subsequences. Consequently, it has no decreasing subsequence of length longer than $2$, meaning that $w$ globally avoids $321$.  
The proof of Theorem~\ref{thm:321-free implies equality} is valid in $\affS_2$, so every element in $\affS_2$ satisfies $\dis(w)/2 = \lS(w)$.

\section{Further remarks}\label{sec:final remarks}

\subsection{Enumeration}

The enumeration of globally $321$-avoiding elements has been carried out in all of the unbranched George groups, under various guises.  We briefly summarize the state of this work below.

The enumeration of $321$-avoiding permutations was carried out by MacMahon \cite[\S97]{MacMahon}: the number of $321$-avoiding permutations in $S_n$ is the Catalan number $C_n$.  This enumeration can be further refined by length, and Cheng--Elizalde--Kasraoui--Sagan gave a detailed analysis of the generating polynomial
\[
C_n(q) := \sum_{w \in S_n: w \text{ avoids } 321} \hspace{-.25in} q^{\lS(w)}.
\]
They showed that it satisfies a $q$-analogue 
\begin{equation}\label{eq:C recurrence}
C_{n + 1}(q) = C_n(q) + \sum_{k = 0}^{n - 1} q^{k + 1} C_k(q) C_{n - k}(q),
\end{equation}
of the usual Catalan recurrence \cite[Theorem 1.1]{CEKS} by establishing statistic-preserving bijections with certain colored Motzkin paths 
and certain polyominoes.

There are numerous formul\ae\ in the literature for the generating function
\[
S(x) = \sum_{n \geq 0} C_n(q) x^n
\]
that counts $321$-avoiding permutations in all symmetric groups simultaneously.  These include a simple functional equation \cite[Equation~(19)]{CEKS}, various continued fraction formul\ae\ \cite[\S7]{CEKS}, and various formul\ae\ that express $S(x)$ as a ratio of $q$-hypergeometric series \cite[p.~6]{BDPP}, \cite[Theorem~1.1]{BB-MJN} or as a ratio of two series involving $q$-binomial coefficients \cite[Theorem~6.1]{BB-MJN}.

These results have been extended to the other unbranched George groups to varying degrees.  By Proposition~\ref{prop:Stembridge}, signed permutations avoiding global $321$-patterns are exactly Stembridge's fully commutative top elements.  (In \cite{BJN2015}, these are the called the \emph{fully commutative elements corresponding to alternating heaps}.) 
Those elements are enumerated by the central binomial coefficient $\binom{2n}{n}$, sometimes called the ``type B Catalan number'' \cite[Proposition~5.9(b)]{Stembridge}. Let
\[
C^B_n(q) := \sum_{w \in S^B_n: w \text{ globally avoids } 321} \hspace{-.5in} q^{\lS(w)}
\]
be the generating polynomial for global $321$-avoiding elements in $S^B_n$ by length and let
\[
S^B(x) := \sum_{n \geq 0} C^B_{n}(q) x^n
\]
be the generating function for these generating functions.  In \cite{BJN2015}, several implicit formul\ae\ are given for $S^B(x)$, from which one can extract the recurrence relation
\begin{equation}\label{eq:B catalan recurrence}
C^B_n(q) = (1 - q^{n + 1})C_n(q) + \sum_{k = 0}^n (q^{n - k + 1} + q^{k + 1}) C_{n - k}(q) \cdot C^B_{k}(q),
\end{equation}
a $q$-analogue of the classic $q = 1$ recurrence 
$$\binom{2(n + 1)}{n + 1} = 2 \sum_{k = 0}^n C_{n - k} \cdot \binom{2k}{k}$$
for central binomial coefficients.
In \cite[\S1.3]{BJN2015}, a statistic-preserving bijection is given between globally $321$-avoiding elements of $S^B_n$ (encoded in terms of a heap structure on their reduced words) and certain colored Motzkin-type paths. 
Explicit formul\ae\ for the generating function $S^B(x)$ are given in \cite[(5) and (36)]{BB-MJN}, the latter in the flavor of a ratio of $q$-hypergeometric series.  

The enumeration for fully commutative elements in $\affS_n$ according to length was initially carried out in \cite{HJ}.  This has since been reworked in various ways \cite{AH, BJN2015, BB-MJN, BJN2019}, including a bijection with colored Motzkin-type paths on a cylinder \cite[\S2.3]{BJN2015} and a formula \cite[Theorem~5.1]{BB-MJN} for the generating function 
\[
\wt{S}(x) := \sum_{n \geq 1} \left( \sum_{w \in \affS_{n}: w \text{ avoids } 321} \hspace{-.25in} q^{\lS(w)}\right)x^n 
\]
in terms of $q$-hypergeometric series.

In \cite{BJN2015}, there is a detailed analysis of fully commutative elements in $\affS^C_n$, including a five-part classification of the associated heaps.  It follows from Theorem~\ref{thm:alt} that the globally $321$-avoiding elements belong to the class that correspond to the \emph{alternating heaps} in \cite{BJN2015}.  These may again be encoded in terms of certain colored Motzkin-type walks 
\cite[p.~26]{BJN2015}.  In \cite[Theorem~6.2]{BB-MJN}, a (not simple, but explicit) formula is given for the generating function
\[
\wt{S}^C(x) := \sum_{n \geq 1} \left( \sum_{w \in \affS^C_{n}: w \text{ avoids } 321} \hspace{-.25in} q^{\lS(w)}\right)x^n
\]
(under the name $L(x)$), as a combination of several different flavors of $q$-series.

On one hand, the preceding results may be considered a complete account of the enumeration of globally $321$-avoiding elements of unbranched George groups.  On the other hand, there are a few natural questions that remain.

\begin{question}
Is there a natural continued fraction for $S^B(x)$, analogous to those found in \cite[\S7]{CEKS} for the symmetric group?  What about affine types?    
\end{question}

\begin{question}
  Are there ``Catalan-like'' recurrences for the generating functions counting globally $321$-avoiding elements of $\affS_n$ and $\affS^C_n$, analogous to \eqref{eq:C recurrence} and \eqref{eq:B catalan recurrence}?    
\end{question}

\begin{question}
    Is there a uniform presentation of explicit formul\ae\ for the generating functions $S(x)$, $S^B(x)$, $\wt{S}(x)$, $\wt{S}^C(x)$?  In particular, is there a ``nice'' explicit formula for $\wt{S}^C(x)$?
\end{question}

\subsection{Root height}\label{sec:root height}

In \cite[Rem.~2.5]{PT}, it was observed that the depth of a positive root for $S_n$ (the smallest number $k$ such that the root can be written as $s_{i_k} \cdots s_{i_1}(\alpha)$ for a simple root $\alpha$ and simple transpositions $s_{i_1}$, \ldots, $s_{i_k}$) is equal to its \emph{height} (the sum $c_1 + \cdots + c_n$ of the coefficients when the root is written as a linear combination $\sum c_i \alpha_i$ of simple roots). 
It was pointed out to us by Nathan Reading that for a reflection in the hyperoctahedral group $S^B_n$, our cost function $\dis(t)/2$ is precisely the same as the height of the associated root in the type B root system.  
We extend this observation now, beginning with some background from \cite[Chapter 4]{BB} and \cite[Chapters 5--6]{H}.

Let $(W, S)$ be any Coxeter group, with $S$ the set of simple reflections generating $W$.  For each pair $s, s'\in S$, let $m_{s, s'}$ be the order of $ss'$; i.e., the label on the edge joining $s$ to $s'$ in the Coxeter--Dynkin diagram.  For each $s \in S$, define a formal symbol $\alpha_s$ (a \emph{simple root}), and let $V$ be the $\RR$-span of $\{ \alpha_s \colon s \in S\}$.  Define a symmetric bilinear form $( -\mid - )$ on $V$ by setting
\[
(\alpha_s\mid \alpha_{s'}) = -\cos \frac{\pi}{m_{s, s'}}
\]
and extending by bilinearity.  In particular, we have $(\alpha_s\mid \alpha_s) = 1$ and that $(\alpha_s\mid \alpha_{s'}) = 0$ if $s$ and $s'$ commute.  It turns out that one may define a faithful action of $W$ on $V$ by setting
\begin{equation}\label{eq:refn}
s(\beta) = \beta - 2 (\alpha_s\mid \beta) \alpha_s
\end{equation}
and extending by multiplication \cite[Theorems~4.2.2 and~4.2.7]{BB}.  This representation is the \emph{standard geometric representation} of $W$.

The \emph{standard root system} of $W$ is the set $\{ w(\alpha_s) \colon w \in W, s \in S\}$ (a subset of $V$); its elements are called \emph{roots}.  As in the finite case, every root is either a linear combination of simple roots with all coefficients nonnegative (in which case we call it a \emph{positive root}), or a linear combination of simple roots with all coefficients nonpositive.  The positive roots are in bijection with the reflections in $W$, with the relationship between a reflection $t$ and its corresponding positive root $\alpha_t$ being given by the same formula \eqref{eq:refn} (replacing $s$ with $t$).  Typically, the roots are not \emph{integer} linear combinations of the simple roots, because the coefficients $(\alpha_s\mid \alpha_s')$ that arise when we apply group elements may be irrational.  For example, in the case of $S^B_2$, this construction gives $V = \RR^2$, $(-\mid -)$ the usual dot product, and simple roots $\alpha_0 = (1, 0)$ and $\alpha_1 = (-1/\sqrt{2}, 1/\sqrt{2})$; the other positive roots are
\[
(0, 1) = \alpha_0 + \sqrt{2} \alpha_1 \qquad\text{ and }\qquad (1/\sqrt{2}, 1/\sqrt{2}) = \sqrt{2}\alpha_0 + \alpha_1.
\]
However, there is a certain condition 
that allows us to rescale the lengths of the simple roots to produce an alternative basis for $V$ that spans a lattice stabilized by $W$.
\begin{definition}
    A Coxeter group $(W, S)$ is \emph{crystallographic} if 
    \begin{itemize}
    \item $m_{s, s'} \in \{2, 3, 4, 6, \infty\}$ for all $s \neq s' \in S$, 
    \item the number of edges with label $4$ in any cycle of the Coxeter--Dynkin diagram is even, and
    \item the number of edges with label $6$ in any cycle of the Coxeter--Dynkin diagram is even.
    \end{itemize}
\end{definition}
\noindent In particular, the George groups are all crystallographic.

As explained in \cite[\S6.6]{H}, when $W$ is crystallographic, there exists a scalar $c_s \in \RR_{> 0}$ for each $s \in S$ such that
\begin{itemize}
    \item $c_s = c_{s'}$ if $m_{s, s'} = 3$ or $\infty$,
    \item $c_s/c_{s'} \in \{\sqrt{2}, 1/\sqrt{2}\}$ if $m_{s, s'} = 4$, and
    \item $c_s/c_{s'} \in \{\sqrt{3}, 1/\sqrt{3}\}$ if $m_{s, s'} = 6$.
\end{itemize}
Fix one such collection of $c_s$.  Then defining $\lambda_s = c_s \alpha_s$ for all $s \in S$, we have (after a calculation) that for all $s, s' \in S$,
\[
s(\lambda_{s'}) = \lambda_{s'} + d_{s, s'} \lambda_s
\]
for an \emph{integer} $d_{s, s'}$; concretely,
\begin{equation}\label{eq:d}
d_{s, s'} = \begin{cases}
        0 & \text{ if } m_{s, s'} = 2 \text{ (no edge)}, \\
        1 & \text{ if } m_{s, s'} = 3 \text{ (in which case }  c_s = c_{s'}), \\
        1 & \text{ if } m_{s, s'} = 4 \text{ or } 6 \text{ and } c_s > c_{s'}, \\
        2 & \text{ if } m_{s, s'} = \infty \text{ or } m_{s, s'} = 4 \text{ and } c_{s'} = \sqrt{2} c_{s}, \text{ or} \\
        3 & \text{ if } m_{s, s'} = 6 \text{ and } c_{s'} = \sqrt{3} c_{s}.
\end{cases}
\end{equation}
It follows that $\{\lambda_1, \ldots, \lambda_n\}$ is a basis for $V$ whose $\ZZ$-span is stabilized by the action of $W$.  Taking these new vectors $\{\lambda_i\}$ as the simple roots, and $\Phi := \{ w(\lambda_s) \colon w \in W, s \in S\}$ the associated root system, it remains true that there is a bijection between reflections $t$ and positive roots $\lambda_t$, with $\lambda_t$ perpendicular to the fixed space of $t$; but now furthermore the height $\height(\lambda)$ of each root $\lambda$ (the sum of its coefficients when written as a linear combination of simple roots) is an integer.  Then we can define the height $\height(w)$ of each element $w$ of $W$ as the cheapest cost of a reflection factorization of $w$, where the cost of a reflection $t$ is $\height(\lambda_t)$.  Since simple reflections all cost $1$, we have for sure that $\height(w) \leq \lS(w)$ for all $w \in W$.  Our next result is an analogue of Theorem~\ref{thm: equality implies reduced word}, valid in any crystallographic Coxeter group.
\begin{proposition}\label{prop:general Coxeter}
    Fix a crystallographic Coxeter group $(W, S)$ with root system $\Phi$, as above.  If $w$ is an element of $W$ whose minimum height-cost (relative to $\Phi$) is $\lS(w)$, then for every consecutive substring of the form $s \ s' \ s$ in every reduced expression for $w$, either the edge between $s$ and $s'$ in the Coxeter--Dynkin diagram has length $\infty$, or the length $c_{s'}$ of the root $\lambda_{s'}$ is larger than the length $c_s$ of the root $\lambda_s$.  In particular, $w$ is fully commutative.
\end{proposition}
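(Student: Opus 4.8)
The plan is to follow the proof of Theorem~\ref{thm: equality implies reduced word} almost verbatim, with disarray replaced by height-cost: there, the crux was that $\dis(s_i s_{i\pm1} s_i)/2 = 2 < 3$, and here it will be that the reflection $s\,s'\,s$ carries a root of height $2$ whenever the edge joining $s$ and $s'$ is finite and $c_{s'}\le c_s$. I would prove the contrapositive: assume $s_{i_1}\cdots s_{i_\ell}$ is a reduced word for $w$ with $\ell = \lS(w)$ that contains a consecutive triple $s\,s'\,s$ with $m_{s,s'}\ne\infty$ and $c_{s'}\le c_s$, and show that the minimum height-cost of $w$ is then strictly less than $\lS(w)$, contradicting the hypothesis.

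First I would pin down the edge label. Since the word is reduced, its factor $s\,s'\,s$ is reduced, hence unequal to $s'$, so $m_{s,s'}\ne 2$; by crystallographicity $m_{s,s'}\in\{3,4,6\}$. The constraints on the scalars $c_s$ recalled just before \eqref{eq:d} then give $c_s = c_{s'}$ if $m_{s,s'} = 3$, and $c_s > c_{s'}$ if $m_{s,s'}\in\{4,6\}$ (there the ratio $c_s/c_{s'}$ is never $1$, and we have assumed $c_{s'}\le c_s$); consulting \eqref{eq:d}, in all three cases $d_{s,s'} = 1$. The element $t := s\,s'\,s$ is the conjugate $s s' s^{-1}$ of $s'$ by $s$ (recall $s = s^{-1}$), so its positive root equals $s(\lambda_{s'}) = \lambda_{s'} + d_{s,s'}\,\lambda_s = \lambda_s + \lambda_{s'}$ by \eqref{eq:refn} (this is a positive combination, so it is indeed the positive root), and this root has height $2$; hence $\height(t)\le 2$. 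Writing $w = u\cdot t\cdot v$ with $u$ and $v$ the subproducts of the reduced word before and after the triple --- products of simple reflections with $\ell - 3$ factors in total --- and using that each simple reflection has height-cost $1$ and that height-cost is subadditive (concatenate reflection factorizations), we obtain $\height(w)\le \height(u) + \height(t) + \height(v) \le (\ell - 3) + 2 < \ell = \lS(w)$, the desired contradiction.

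Finally I would deduce full commutativity. If $w$ were not fully commutative, then by the standard characterization of fully commutative elements (\cite{Stembridge96, Stembridge}) some reduced word for $w$ would contain a consecutive alternating factor $\underbrace{sts\cdots}_{m_{s,t}}$ with $m_{s,t}$ finite and at least $3$. When $m_{s,t}\ge 4$ this factor already contains both $s\,t\,s$ and $t\,s\,t$ as consecutive triples; when $m_{s,t}=3$ the factor is $sts$, and applying the braid relation produces a second reduced word for $w$ containing $tst$. Either way, $w$ has reduced words exhibiting both triples, so the dichotomy just proved forces $c_t > c_s$ and $c_s > c_t$ at once, a contradiction. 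Hence $w$ is fully commutative.

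The argument is short; the only points requiring care are the brief case analysis producing $d_{s,s'}=1$ (exactly where the hypotheses $m_{s,s'}\ne\infty$ and $c_{s'}\le c_s$ are used) and, in the last step, the observation that an alternating braid factor of length at least $3$ always forces both orientations $sts$ and $tst$ to occur among reduced words of $w$. I do not anticipate a genuine obstacle.
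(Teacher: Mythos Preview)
Your proposal is correct and follows essentially the same approach as the paper's proof: both take the contrapositive, compute that the reflection $ss's$ has associated root $s(\lambda_{s'}) = \lambda_{s'} + d_{s,s'}\lambda_s$ of height $1 + d_{s,s'} = 2$ under the stated hypotheses, invoke subadditivity of height-cost to get $\height(w) \le (\lS(w)-3)+2 < \lS(w)$, and then deduce full commutativity by observing that a long braid factor forces both orientations $ss's$ and $s'ss'$ to appear (the paper handles the $m_{s,s'}=3$ case by noting $c_s=c_{s'}$ directly rather than via the braid move, but this is cosmetic).
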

\begin{proof}
Assume that $(W, S)$ is crystallographic, $w \in W$, and $s, s'$ are two simple reflections such that $w$ has a reduced expression of the form $\cdots s s' s \cdots$.  The product $s s' s$ is a reflection, and the root $\lambda_{s s' s}$ orthogonal to its fixed plane is exactly $s(\lambda_{s'}) = \lambda_{s'} + d_{s, s'} \lambda_s$, of height $1 + d_{s, s'}$.  Since $s s' s$ appears in a reduced expression, $s$ and $s'$ cannot commute, so $m_{s, s'} > 2$.  If $d_{s, s'} = 1$, then the cost of the single reflection $s s' s$ is $\$ 2$, which is strictly less than the total cost when taking $s$, $s'$, $s$ separately; and therefore the cost of $w$ is strictly less than $\lS(w)$ in this case (because we can take the special reduced expression $\cdots s s' s \cdots$ and get each factor for $\$1$ but get the three consecutive factors $ss's$ together for $\$2$).  Taking the contrapositive, if $\height(w) = \lS(w)$ then $d_{s, s'} > 1$.  Consulting \eqref{eq:d}, we see that in this case either $m_{s, s'} = \infty$ or $c_{s'} > c_s$, as claimed.

Now suppose that $w$ satisfies the conditions of the proposition; we wish to show it is fully commutative.  If $w$ is not fully commutative, then it has a reduced word with a consecutive substring having one of the following forms:
\begin{enumerate}
    \item $ss's$ where $m_{s,s'} = 3$, \label{m=3} 
    \item $ss'ss'$ where $m_{s,s'} = 4$, or \label{m=4}
    \item $ss'ss'ss'$ where $m_{s,s'} = 6$. \label{m=6}
\end{enumerate}
When $w$ satisfies the conditions of the proposition, it has no substring of the form listed in \eqref{m=3} because being crystallographic means that the root-lengths $c_s$ and $c_{s'}$ are equal when $m_{s, s'} = 3$. Similarly, the conditions of the proposition prevent \eqref{m=4} and \eqref{m=6} from occurring because then either $ss's$ or $s'ss'$ would violate the requirement that the longer root is always indexed by the central element. Therefore, $w$ is fully commutative.
\end{proof}

\begin{remark}\label{rem:fan remark}
    The reduced word condition in Proposition~\ref{prop:general Coxeter} is very close to the definition of \emph{commutative element} introduced by Fan in \cite[Chapter 11]{FanThesis}, in the context of Weyl groups (i.e., \emph{finite} crystallographic Coxeter groups).  Indeed, they differ in two ways:
    (1) Fan does not consider edges labeled $\infty$ (because he considers only finite groups), and (2) the inequality in edge-lengths in Fan's definition is reversed relative to our statement.  This latter difference is insubstantial: one may replace the root lattice $\Phi$ with its dual lattice
    \[
    \Phi^\vee := \{ \beta \in V : (\beta \mid \lambda) \in \ZZ \text{ for all } \lambda \in \Phi\}
    \]
    (the \emph{coroot lattice}) to reverse all length inequalities.  Thus, for example, the elements in $S^B_n$ that satisfy $\dis(w)/2 = \lS(w)$ are Fan's commutative elements of type C (rather than the commutative elements of type B).   
    
    Fan also asserts (without proof) that in $S^B_n$, using the type C root system, these commutative elements are the ones whose images under the natural embedding $S^B_n \hookrightarrow S_{2n}$ avoid $321$.  This is a close relative of our Proposition~\ref{prop:Stembridge} for $S^B_n$.
\end{remark}

\begin{question}
    Is the converse of Proposition~\ref{prop:general Coxeter} true?
\end{question}

We end this section by observing that Proposition~\ref{prop:general Coxeter} is a strict generalization of Theorem~\ref{thm: equality implies reduced word}.  Indeed, it is not difficult to verify (for example, by induction on the depth) that, in the standard root system of $\affS_n$, one has for each reflection $t$ that $\dis(t)/2 = \height(\alpha_t)$ for every reflection $t$, where $\alpha_t = \lambda_t$ is the positive root perpendicular to the fixed plane of $t$.

The situation in $\affS^C_n$ is similar but slightly more complicated: there are three nonisomorphic crystallographic root systems associated with $\affS^C_n$, which arise by choosing independently whether the simple roots $\lambda_0$ and $\lambda_n$ (corresponding to the simple reflections $s_0$ and $s_n$) should have lengths $\sqrt{2}$ or $\frac{1}{\sqrt{2}}$ times as large as the simple roots $\lambda_1, \ldots, \lambda_{n - 1}$ (which all have the same length).  Of these, the one we want is the system, denoted $C^\vee_n$ in \cite{Macdonald}, in which $\lambda_0$ and $\lambda_n$ are both short, so that
\[
\begin{split}
& s_0(\lambda_1)   = 2\lambda_0 + \lambda_1, \qquad
s_n(\lambda_{n - 1})  = \lambda_{n - 1} + 2\lambda_n, \qquad
\text{ and } \\
& s_i(\lambda_{i \pm 1}) = \lambda_{i \pm 1} + \lambda_i 
\qquad \text{ for } 1 \leq i \leq n - 1.
\end{split}
\]
One can check that with this choice of root system, we again have $\dis(t)/2 = \height(\lambda_t)$ for each reflection $t$.

\begin{question}
    Is there a nice combinatorial formula for the height of elements of $S^B_n$ with respect to the type C root system?  What about for elements of $\affS^C_n$ with respect to the other crystallographic root systems for this group?
\end{question}

\section*{Acknowledgements}

We are grateful to Nathan Reading for pointing out the relationship between our work and root heights, as discussed in Section~\ref{sec:root height}.


\begin{thebibliography}{99}
\bibitem{AH} S.~Al Harbat.  Tower of fully commutative elements of type $\wt{A}$ and applications. \emph{J. Algebra} \textbf{465}, 2016, 111--136.
\bibitem{BDPP} E.~Barcucci, A.~Del Lungo, E.~Pergola, and R.~Pinzani.  Some permutations with forbidden subsequences and their inversion number.  \emph{Disc.~Math.} \textbf{234}, 2001, 1--15.
\bibitem{BB-MJN} R.~Biagioli, M.~Bousquet-Mélou, F.~Jouhet, and P.~Nadeau.  Length enumeration of fully commutative elements in finite and affine Coxeter groups.  \emph{J.~Algebra} \textbf{513}, 2018, 466--515.
\bibitem{BJN2015} R.~Biagioli, F.~Jouhet, and P.~Nadeau.  Fully commutative elements in finite and affine Coxeter groups.  \emph{Monatsh.~Math.} \textbf{178}, 2015, 1--37.
\bibitem{BJN2019} R.~Biagioli, F.~Jouhet, and P.~Nadeau.  $321$-avoiding affine permutations and their many heaps.  \emph{J.~Combin.~Theory Ser.~A} \textbf{162}, 2019, 271--305.
\bibitem{BJS} S.C.~Billey, W.~Jockusch, and R.P.~Stanley. Some combinatorial properties of Schubert polynomials. \emph{J.~Algebraic Combin.} \textbf{2}, 1993, 345--374.
\bibitem{BB} A.~Bj\"orner and F.~Brenti.  \emph{Combinatorics of Coxeter Groups}.  Graduate Texts in Mathematics, 231. Springer, New York, 2005.
\bibitem{CEKS} S.-E.~Cheng, S.~Elizalde, A.~Kasraoui, and B.E.~Sagan.  Inversion polynomials for $321$-avoiding permutations.  \emph{Disc.~Math.} \textbf{313}, 2013, 2552--2565.
\bibitem{EE} H.~Eriksson and K.~Eriksson.  Affine Weyl groups as infinite permutations.   \emph{Electron.\ J.\ Combin.} {\bf 5}, 1998, R18.
\bibitem{FanThesis} C.K.~Fan. A Hecke algebra quotient and properties of commutative elements of a Weyl group. PhD thesis, MIT, 1995.
\bibitem{Green} R.\ M.\ Green.  On 321-avoiding permutations in affine Weyl groups.  \emph{J.\ Algebraic Combin.} {\bf 15}(3), 2002, 241--252.
\bibitem{HJ} C.R.H.~Hanusa and B.C.~Jones. The enumeration of fully commutative affine permutations. \emph{Europ.~J.~Combin.} \textbf{31}(5), 2010, 1342--1359.
\bibitem{H} J.E.~Humphreys. \emph{Reflection Groups and Coxeter Groups}. Cambridge Studies in Advanced Mathematics, 29. Cambridge University Press, Cambridge, 1990.
\bibitem{LT} J.B.\ Lewis and B.E.\ Tenner.  Bargain hunting in a Coxeter group. To appear in \emph{Ann.~Combin.}
\bibitem{Macdonald} I.G. Macdonald,  Affine root systems and Dedekind's $\eta$-function. \emph{Invent.\ Math.}\ \textbf{15}, 1972, 91--143. 
\bibitem{MacMahon} P.A. MacMahon. \emph{Combinatory Analysis}, vols.~1 and 2. Cambridge University Press, 1916; reprinted by Chelsea, New York, 1960, and by Dover, New York, 2004.
\bibitem{PT} T.K.~Petersen and B.E.~Tenner. The depth of a permutation.  \emph{J.\ Combin.} {\bf 6}(1-2), 2015, 145--178.
\bibitem{Stembridge96} J.R.~Stembridge. On the fully commutative elements of Coxeter groups. \emph{J.~Alg.~Combin.} {\bf 5}, 1996, 353--385.
\bibitem{Stembridge} J.R.~Stembridge.  Some combinatorial aspects of reduced words in finite Coxeter groups.  \emph{Trans.~Amer.~Math.~Soc.} {\bf 349}(4), 1997, 1285--1332.
\end{thebibliography}
\end{document}